\newtheorem{thm}{Theorem}[section]
\newtheorem{cor}[thm]{Corollary}
\newtheorem{lem}[thm]{Lemma}
\newtheorem{prop}[thm]{Proposition}
\theoremstyle{definition}
\theoremstyle{definition}
\theoremstyle{remark}
\newtheorem{rem}[thm]{Remark}
\newcommand\be{\begin{equation}}
\newcommand\ee{\end{equation}}
\newcommand\bee{\begin{equation*}}
\newcommand\eee{\end{equation*}}
\newcommand\ben{\begin{enumerate}}
\newcommand\een{\end{enumerate}}
\def\Q{\ensuremath {{\mathbb Q}}}
\def\CC{\ensuremath {{\mathbb C}}}
\def\H{\ensuremath {{\mathcal H}}}
\def\R{\ensuremath {{\mathbb R}}}
\def\A{\ensuremath {{\mathbb A}}}
\def\AA{\ensuremath {{\mathcal A}}}
\def\Z{\ensuremath {{\mathbb Z}}}
\def\C{\ensuremath {\mathscr C}}
\def\g{\ensuremath {\mathfrak g}}
\def\c{\ensuremath {\mathfrak c}}
\def\a{\ensuremath {\mathfrak a}}
\def\tr{\ensuremath {\mathrm{tr}}}
\def\GL{\ensuremath {\mathrm{GL}}}
\def\vol{\ensuremath {\mathrm{vol}}}
\def\bs{\ensuremath {\backslash}}
\numberwithin{equation}{section}
\title{A note on  Poisson summation for GL(2)}
\author{Tian An Wong}
\email{tiananw@umich.edu}
\address{University of Michigan, Dearborn, MI}
\keywords{Arthur-Selberg trace formula, Beyond Endoscopy, Poisson summation}
\subjclass[2010]{Primary 11F72, Secondary 11F68}
\begin{document}

\begin{abstract}
Using analytic number theory techniques, Altu\u{g} showed that the contribution of the trivial representation to the Arthur-Selberg trace formula  for GL(2) over $\Q$ could be cancelled by applying a modified Poisson summation formula to the regular elliptic contribution. Drawing on recent works, we re-examine these methods from an adelic perspective. 
\end{abstract}

\maketitle

\tableofcontents \addtocontents{toc}{\protect\setcounter{tocdepth}{1}}

\section{Introduction}

\subsection{Motivation}

%\footnote{There are no conflicts of interest to declare.\\Data sharing not applicable to this article as no datasets were generated or analysed during the current study }
The Arthur-Selberg trace formula is a central tool in the study of automorphic forms. A problem that arose in Langlands' later studies of the trace formula was the removal of the contribution of nontempered representations to the spectral expansion. In \cite{FLN}, Frenkel, Langlands, and Ng\^o proposed that an additive Poisson summation might be applied to the elliptic part of the stable trace formula and that the zeroth term of the dual sum would be equal to the character of the trivial representation of $G$. 

Consider the case of $G=\GL(2)$ over a number field $F$. The regular elliptic part of the trace formula for $G$ for a suitable test function $f$ is given by
 \[
J^G_\mathrm{ell}(f)= \sum_\gamma a^G(\gamma)O_\gamma(f) =  \sum_{\gamma}\text{vol}(G_\gamma(F)\backslash G_\gamma(\A)^1) \int_{G_\gamma(\A)\backslash G(\A)} f(x^{-1} \gamma x) dx,
 \]
where the sum runs over (stable) regular elliptic conjugacy classes of $G(F)$, and $G_\gamma$ the centralizer of $\gamma$ in $G$. The idea of \cite{FLN}, in this case, is first to reindex the conjugacy class $\gamma$ in terms of a certain Steinberg-Hitchin base. Its naive form is simply the space of characteristic polynomials $(\tr(\gamma),\det(\gamma)) \in F\times F^\times\subset \A\times \A^\times$, where the summands are then to be interpreted suitably as factorizable functions over $\A\times \A^\times$ so that an additive Poisson summation may be applied to the $F$ sum. There are several obstructions that arise:
\ben
\item
The volume term is a global object, a priori defined over $F$ and not adelically. In all the related works \cite{FLN,ST,alt1,melk,beram}, it is re-expressed as an $L$-value using a class number formula for tori, which can then be factorized. (This is what Frenkel, Langlands, and Ng\^o refer to as the adelization of the trace formula.)
\item
The sum over $F$ is incomplete, running only over the image of regular elliptic elements. If we naively complete the sum while the $L$-value expression above, the $L$-value diverges at the image of hyperbolic elements.
\item
The orbital integral has singularities, which must be controlled in order to apply an appropriate Poisson formula to the summands.
\een
Succeeding this, heuristics of \cite{FLN} suggest that the dominant term in the dual Poisson sum should equal the contribution of the trivial representation of $G$.

For a special case of $\GL_2(\Q)$ with a certain family of test functions, these problems were overcome by Altu\u{g} \cite{alt1} using classical analytic number theory techniques, completing a test case initiated by Langlands in \cite{BE}. In particular, an approximate functional equation was applied to the $L$-value, which allowed a proper completion of the sum and the smoothing of the orbital integral's (jump) singularities. This was recently generalized to totally real fields in \cite{melk} (with mild assumptions in characteristic 2) and over $\mathbb Q$ but allowing for ramification at finite primes \cite{beram}. The demanding analysis required in these works strongly indicate that an adelic reformulation is necessary for further development. (Also, \cite{GKM} showed that in higher rank the class number formula approach encounters Artin's conjecture for $L$-functions as an obstruction.)

The purpose of this partly expository note is to provide an adelic analogue of \cite{alt1,melk}, re-establishing the cancellation of the contribution of the trivial representation as envisioned in \cite{FLN} for some simple cases related to GL(2). We give two adelic treatments: the first makes use of the analysis carried out by Langlands \cite{ST} and Gordon \cite{gornus} regarding the behaviour of stable orbital integrals over the Steinberg-Hitchin base, while the second uses a relation to binary quadratic forms following Matz \cite{matz0}. 

\subsection{Poisson apr\`es Langlands}

Let $Z$ be a subgroup of GL(1) and $G(\A)^Z = \{g \in G(\A): \det(g) \in Z(\A)\}$. Also let $K$ be a compact open subgroup of $G(\A_f)$ where $\A_f$ is the finite adele ring. 
 Let $\C(G(\A)^Z;K)$ be the space of smooth bi-$K$-invariant functions on $G(\A)^Z$ which belong to $L^1(G(\A)^Z)$ along with all their derivatives. It is a Fr\'echet space under the family of seminorms $|| f * X||_{1}$, where $X$ is any element in the universal enveloping algebra of the Lie algebra of $G(\R)\cap G(\A)^Z$ and $f\in \C(G(\A)^Z;K)$. The Hecke subalgebra of smooth compactly supported functions $\H(G,K) = C_c^\infty(G(\A)^Z;K)$ is dense in this space. Finally, write $\C(G^Z)$ for the inductive limit of $\C(G(\A)^Z;K)$ as $K$ ranges over open compact subgroups of $G(\A)^Z$. This is a subset of test functions that Finis, Lapid and M\"uller extended the trace formula to \cite{FLM,FL}. Also, given a localization $F_v$ of $F$, we denote the analogous spaces by $\C(G(F_v)^Z)$ and so on when the context is clear. 

Let  $\AA = \mathbb G_a\times \mathbb G_m$. We denote by $\AA_\text{ell}$ the elliptic locus, $\AA_\text{rss}$ the regular semisimple locus, and $\AA_\text{sing}$ the singular locus respectively. Also set $\AA_\text{spl} = \AA_\text{rss} - \AA_\text{ell}$ and $\AA_\text{nell} = \AA - \AA_\text{ell}$ so that 
\[
\AA = \AA_\text{rss} \cup \AA_\text{sing} = \AA_\text{ell} \cup \AA_\text{spl} \cup \AA_\text{sing} = \AA_\text{ell}\cup \AA_\text{nell}.
\]
Given $f$ on $G$, we obtain a function $\theta_f$ on $\AA$ as the pushforward given by integrating along the fiber over $a\in \AA$ as in \eqref{thetaf}, and denote by $\hat\theta_f$ its Fourier transform in the $\mathbb G_a$ factor. In this paper, we shall assume $Z$ to be a finite group: this includes the cases of SL(2) in \cite{ST} and the determinant condition imposed in \cite{alt1,melk,beram}. 

\begin{thm}
\label{main}
Let $f \in \C(G^Z)$. Then the elliptic part of the trace formula for $\GL(2)$ can be expressed as 
\[
J^G_\mathrm{ell}(f) = \tr(1(f)) + \sum_{\substack{a\in \AA(F)\\ a\neq 0}}  \hat \theta_f(a) - \sum_{a\in \AA_\textnormal{nell}(F)} \theta_f(a).
\]
\end{thm}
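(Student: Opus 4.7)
The plan is to derive the identity as a formal consequence of the adelic Poisson summation formula applied to the pushforward $\theta_f$ on $\AA(F) = F$, after reindexing the geometric sum by the characteristic polynomial.

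First, I would rewrite the regular elliptic part as
\[
J^G_\mathrm{ell}(f) = \sum_{a \in \AA_\mathrm{ell}(F)} \theta_f(a),
\]
bundling the volume factor $\vol(G_\gamma(F)\bs G_\gamma(\A)^1)$ together with the orbital integral into the adelic pushforward $\theta_f(a)$. Here the two routes alluded to in the introduction diverge: either one uses the class number formula for tori to express the volume as an $L$-value and then invokes the local analysis of Langlands and Gordon, or one recasts the sum via binary quadratic forms following Matz. In either case the upshot is an adelic, factorizable description of $\theta_f$ whose regularity across $\AA$ is controlled by the cited works.

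Second, I would complete the sum to all of $F$ by writing
\[
\sum_{a \in \AA_\mathrm{ell}(F)} \theta_f(a) = \sum_{a \in F} \theta_f(a) - \sum_{a \in \AA_\mathrm{nell}(F)} \theta_f(a),
\]
and then apply adelic Poisson summation to the first sum, with respect to the self-dual measure and a fixed nontrivial character of $\A/F$:
\[
\sum_{a \in F} \theta_f(a) = \hat\theta_f(0) + \sum_{\substack{a \in F\\ a \neq 0}} \hat\theta_f(a).
\]
The continuity assumption on $\theta_{f_v}$ at nonarchimedean places of residual characteristic $2$ is precisely what is needed to place $\theta_f$ in an adelic Schwartz-Bruhat-type class for which Poisson summation is valid; the behaviour at the remaining local places is already controlled in the cited works.

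Finally, I would identify the zero Fourier coefficient with the trivial representation by unfolding,
\[
\hat\theta_f(0) = \int_\A \theta_f(a) \, da = \int_{G(\A)^1} f(g) \, dg = \tr(1(f)),
\]
where the middle equality is a Weyl-type integration formula along the trace fibration, and the last is the standard formula for the trace of $f$ on the trivial representation of $G$. The main obstacle, I expect, will be in establishing the regularity of $\theta_f$ on the full affine line $\AA$, most notably in verifying that the jump singularities of the orbital integrals along $\AA_\mathrm{sing}$ are smoothed out by the combination with the volume/$L$-factor so that Poisson summation is applicable. This is the adelic analogue of the role played by the approximate functional equation in \cite{alt1,melk,beram}, and it is precisely where the residual-characteristic-$2$ hypothesis enters.
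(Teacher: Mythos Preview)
Your skeleton matches the paper's: rewrite $J^G_\mathrm{ell}(f)$ as $\sum_{\AA_\mathrm{ell}(F)}\theta_f(a)$, complete to $\AA(F)$, apply adelic Poisson summation, and identify $\hat\theta_f(0)=\tr(1(f))$ by a Weyl integration formula over the trace fibration. Two substantive points, however, are missing or misstated.

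First, the paper does \emph{not} proceed for general $f\in\C(G)$ in one pass. It argues in two stages: (i) for $f\in C_c^\infty(G)$, where compact support gives $\theta_f$ compactly supported on $\AA(\A)$ and Langlands' estimates (Lemmes 5.1--5.4 of \cite{ST}) furnish the $O(|y|^{-\lambda})$ decay needed for absolute convergence of the dual sum in Tate's Poisson lemma; then (ii) an extension to $\C(G)$ by showing $f\mapsto\theta(|f|)$ is a continuous seminorm, following Finis--Lapid. Your proposal jumps directly to $\C(G)$ and asserts Poisson applies once $\theta_f$ is ``in an adelic Schwartz--Bruhat-type class,'' but for noncompactly supported $f$ you have no direct control on the dual sum; the continuity/density argument is not optional.

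Second, your framing of how $\theta_f$ is defined is slightly off and obscures the paper's key point. You write that one ``uses the class number formula for tori to express the volume as an $L$-value'' and that the singularities are ``smoothed out by the combination with the volume/$L$-factor.'' That is exactly the route the paper \emph{avoids}: the $L$-value diverges on the split locus, which is why \cite{alt1,melk} need the approximate functional equation. The paper's insight is instead to define $\theta_f$ via the \emph{geometric} measure $d\omega_{c^{-1}(a)}$ on the Steinberg--Hitchin fibres, so that $\theta_{f_v}(a)=|D(\gamma)|_v^{1/2}O_\gamma(f_v)$; this normalization is what renders $\theta_{f_v}$ continuous across $\AA_\mathrm{sing}$ (by Harish-Chandra's boundedness and the Shalika germ analysis in \cite{ST,gornus,harm}) and finite across $\AA_\mathrm{spl}$, without ever invoking an $L$-value. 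The residual-characteristic-$2$ hypothesis enters precisely because Langlands' germ computations in \cite{ST} exclude that case.
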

 
\noindent We prove the theorem in two steps. We first assume in Section \ref{aell} that $f$ belongs to the space $C_c^\infty(G^Z)$ of smooth compactly supported functions on $G(\A)^Z$, so that we can freely use the earlier computations of Langlands \cite{ST}. Its proof amounts to piecing together the known results and experience gained in the works discussed above. Then in Section \ref{cont}, following the method of Finis and Lapid, we extend this continuously to $\C(G^Z)$.

\subsection{Relation to the $r$-trace formula}
\label{big}

Before continuing, let us describe the larger context in which this work is situated. The Arthur-Selberg trace formula remains the most powerful tool by which general cases of Langlands' Functoriality conjecture have been proved. In the original Beyond Endoscopy proposal \cite{BE}, Langlands proposed a new refinement of the trace formula in order to attack Functoriality in full generality. The idea was to weight the spectral side of the trace formula with coefficients that would detect when a given automorphic representation $\pi$ was a transfer from a smaller group. This coefficient was to be the order of the pole at $s=1$ of the associated $L$-function $L(s,\pi,r)$. By a standard Tauberian theorem often used in analytic number theory, Langlands gave an expression for this coefficient that led to a limit of trace formulas (e.g., \cite{venk,alt3}), but this expression is valid only for tempered $\pi$. Thus the contribution of nontempered representations, among which the trivial representation is the most nontempered one, has to be removed before such a limit could be analyzed. This led to the perspetive of \cite{FLN}.

As such, the cancellation of the trivial representation only represents a first, though extremely formidable, step in this long process of refining the trace formula.  Nonetheless, as with \cite{alt2}, one can obtain bounds towards Ramanujan as a result of this cancellation, so there are tangible rewards to be gained along the way. 

It is also worth noting that even though the removal of the nontempered contribution can be sidestepped as in \cite{witf}, it still remains to prove the meromorphic continuation of the modified trace formula at hand. This is ultimately what is needed for this refinement of the trace formula, which Arthur calls the $r$-trace formula \cite{problems}. The reason we consider the larger space of test functions $\C(G^Z)$, which are not necessarily compactly supported, is because it contains the {\em basic functions} $f^r_s$ which  the $r$-trace formula requires as inputs \cite{FLGL2,witf}. (This noncompactness alternatively manifests in the limit of trace formulas mentioned above.) 
Then the $r$-trace formula amounts to showing that $J^G(f^r_s)$ has meromorphic continuation in $s\in\CC$. 

\subsection{Poisson apr\`es Matz}

Our second result is inspired by \cite{matz0}, who studied the case of $f^r_s$ with $r$ equal to the standard representation for GL(2). The trace map on $G$ has fibers isomorphic to a space of binary quadratic forms, but here the additive Poisson summation is applied on the level of functions rather than to orbital integrals and thus avoids the analytic difficulties of the first method. Referring to \S\ref{matz} for notation, we have the following variation of Theorem \ref{main}.

\begin{thm}
\label{main2}
Let $f\in \C(G(\A)/C(\A))$.  Then the trace formula for $G$ is equal to the sum of
\[
J^G(f) =  \tr(1(f)) + \sum_{\gamma_{q,X}} \vol(C(\A)G_{\gamma_{q,X}}(F)\backslash G_{\gamma_{q,X}} (\A))\int_{G_{\gamma_{q,X}}(\A)\bs G(\A)} \hat f(x^{-1}\gamma_{q,X} x)  dx,
\]
where $\gamma_{q,X}$ runs over elliptic conjugacy classes of $G(F)$, and the integral over $C(\A)G(F)\backslash G(\A)$ of 
\begin{align*}
&\sum_{X \in  V_G^\mathrm{nell}(F)}\sum_{q\in F^\times} \hat f(x(q,X))  \\
&- \sum_{\xi\in P(F)\bs G(F)} \int_{N(\A)} \sum_{\alpha\in F^\times}f(x^{-1}\xi^{-1}\begin{pmatrix}\alpha&0\\0&1\end{pmatrix} n \xi x) \chi_c(H(\xi x)) dn.\notag
\end{align*}
\end{thm}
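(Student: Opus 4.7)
The plan is to start from the Arthur trace formula for $\GL(2)$ in its standard decomposition $J^G(f) = J^G_{\mathrm{ell}}(f) + J^G_{\mathrm{par}}(f)$, then apply Poisson summation directly to the test function $f$ under the fibration over the space of binary quadratic forms $V_G$, following \cite{matz0}. The key contrast with Theorem~\ref{main} is that Poisson is now applied at the level of functions rather than at the level of orbital integrals, so the singular behaviour of orbital integrals on the Steinberg--Hitchin base is avoided.

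First, I would unfold $J^G_{\mathrm{ell}}(f)$ using Matz's parametrization: each regular elliptic class $\gamma$ of $G(F)$ corresponds uniquely to a pair $(q,X)$ with $q\in F^\times$ and $X$ in a set of $G(F)$-orbit representatives for $V_G^{\mathrm{ell}}(F)$. Absorbing the volume factor into an integration over $Z(\A)G(F)\backslash G(\A)$ and interchanging sums would yield
\[
J^G_{\mathrm{ell}}(f) = \int_{Z(\A)G(F)\backslash G(\A)} \sum_{X\in V_G^{\mathrm{ell}}(F)} \sum_{q\in F^\times} f(x(q,X))\, dx.
\]
This parallels the unfolding step in Theorem~\ref{main}, but is now indexed by binary quadratic forms rather than by the linear trace fibration.

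Second, I would extend the $X$-sum from $V_G^{\mathrm{ell}}(F)$ to all of $V_G(F)$ by adding and subtracting the $V_G^{\mathrm{nell}}(F)$ contribution, and extend the $q$-sum from $F^\times$ to $F$. Poisson summation in $q$, using the self-dual additive character of $F\backslash \A$, then replaces $\sum_{q\in F} f(x(q,X))$ by $\sum_{q\in F} \F f(x(q,X))$ with $\F$ as in \eqref{fourier}. On the dual side the contributions split as follows: (i) the $q=0$ Fourier coefficient, integrated over $Z(\A)G(F)\backslash G(\A)$, should be identified with $\tr(1(f))$, the contribution of the trivial representation envisioned in \cite{FLN}; (ii) the $q\neq 0$, $X\in V_G^{\mathrm{ell}}(F)$ part refolds into the elliptic orbital integrals of $\F f$ over classes $\gamma_{q,X}$, giving the second term in the statement; (iii) the $q\in F^\times$, $X\in V_G^{\mathrm{nell}}(F)$ terms remain as the first sum inside the integral.

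Finally, the bookkeeping correction from extending $V_G^{\mathrm{ell}}(F)$ to $V_G(F)$ must be matched against the parabolic contribution $J^G_{\mathrm{par}}(f)$ of the trace formula, which is precisely the second sum inside the integral: the truncation function $\chi_c$ arises from the standard modified kernel for the non-compact quotient, while the double sum over $P(F)\backslash G(F)$ and the $N(\A)$-integration come from unfolding the hyperbolic/unipotent kernel. The main obstacle will be justifying the termwise application of Poisson summation and the interchange of summation with integration over $Z(\A)G(F)\backslash G(\A)$ when $f\in\C(G)$ is only Schwartz rather than compactly supported; the analytic difficulties that plagued Theorem~\ref{main} are largely absent because Poisson acts on $f$ itself, but the identification of the $q=0$ dual contribution with $\tr(1(f))$ will still require a nontrivial global computation, and is the conceptual core of the theorem.
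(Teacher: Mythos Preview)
Your strategy is workable but differs from the paper's in its starting point, and contains a conceptual slip in the final step.

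The paper does \emph{not} begin from the decomposition $J^G(f)=J^G_{\mathrm{ell}}(f)+J^G_{\mathrm{par}}(f)$. It starts directly from the Gelbart--Jacquet truncated kernel \eqref{kernel2}, whose main term is already the \emph{full} sum $\sum_{\gamma\in G(F)}f(x^{-1}\gamma x)$. Under $\mathfrak{gl}_2\simeq V\oplus\mathbb G_a$ this reindexes naturally as $\sum_{X\in V_G(F)}\sum_{q\in F}f(x(q,X))$ with no completion step whatsoever; Poisson in $q$ is applied directly, the dual $q=0$ term is identified with $\tr(1(f))$ using that $V$ is prehomogeneous under $G$, and only afterwards is the remaining $q\in F^\times$ sum split according to $V_G^{\mathrm{ell}}$ versus $V_G^{\mathrm{nell}}$. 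The truncation term in \eqref{kernel2} is carried along untouched throughout and becomes the second line of the integrand in the statement. Remark~\ref{comp2} explicitly contrasts this with the route you propose, which is essentially Matz's original one: restrict to $V_G^{\mathrm{ell}}$ first, then complete.

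Your final paragraph misidentifies the second sum in the theorem with $J^G_{\mathrm{par}}(f)$. That sum is the raw truncation subtraction from the kernel, not the regularized parabolic contribution to the trace formula. In your framework what actually happens is that the non-elliptic kernel terms you add and subtract combine with $J^G_{\mathrm{par}}(f)$ to \emph{reconstitute} precisely this truncation term, via the tautology $J^G_{\mathrm{par}}(f)=\int\bigl[\sum_{\gamma\ \mathrm{nell}}f(x^{-1}\gamma x)-(\text{truncation})\bigr]dx$; once that bookkeeping is done you have arrived at the paper's starting point, and the rest follows. There is also a minor indexing slip: ellipticity is determined by the discriminant of $X$, not by $q=\tr(\gamma)$, so for regular elliptic $\gamma$ the trace already ranges over all of $F$ and no extension of the $q$-sum from $F^\times$ to $F$ is needed on the original side.
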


\noindent %In particular, there is no assumption in residual characteristic 2. 
As in the first case, the Fourier transform and Poisson formula are taken with respect to the trace variable.
The second and third terms above correspond to \eqref{ell} and \eqref{nell}, the latter being the contribution of regular hyperbolic and unipotent conjugacy classes, regularized by a naive truncation operator. This parallels the additional terms from $\AA_\textnormal{nell}(F)$ in Theorem \ref{main}, but their occurrence here is entirely natural---there is no choice that needs to be made in the completion of the sum. 

\subsection{Application to $L$-functions} For applications, it is also desirable to construct test functions that capture the behaviour of archimedean test functions. We do this in Section \ref{archbas} by constructing the archimedean  basic functions. This allows one to weight the spectral side of the trace formula, for example, with $L$-functions of Maass forms. We then have the following implication.

 \begin{cor}
 \label{corr}
Let $\pi$ be a cuspidal automorphic representation of $G$ unramified away from $S$. 
If $J^G(f^r_s)$ has meromorphic continuation to $s=1$ and is holomorphic in $\mathrm{Re}(s)>1$, then 
the unramified automorphic $L$-function $L^S(s,\pi,r)$ is holomorphic in $\mathrm{Re}(s)>1$ and has meromorphic continuation to $\mathbb C$. In particular, the generalized Ramanujan conjecture holds for $\pi$.
\end{cor}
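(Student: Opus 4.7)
The approach is to apply the spectral expansion to the test function $f^r_s$, suitably modified at the ramified places $S$, and extract $L^S(s,\pi,r)$ from the cuspidal contribution. The basic function factorizes as $f^r_s = \prod_v f^r_{s,v}$, and its defining property at an unramified place is $\tr \pi_v(f^r_{s,v}) = L(s,\pi_v,r)$ for $\pi_v$ unramified. Hence for any cuspidal $\pi$ unramified outside $S$,
\[
\tr \pi(f^r_s) = L^S(s,\pi,r)\cdot \tr \pi_S(f^r_{s,S}),
\]
so $L^S(s,\pi,r)$ appears as a summand, weighted by multiplicity and by a local trace at $S$, in the cuspidal spectral contribution $J^G_\mathrm{cusp}(f^r_s)$ to the $r$-trace formula.

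First I would replace $f^r_{s,S}$ by a test function $h_S \in \C(G(F_S))$ designed to isolate $\pi_S$ on the cuspidal spectrum; the existence of such pseudo-coefficients, modulo finite linear combinations, follows from multiplicity one for $\GL(2)$ together with linear independence of characters on any finite set of representations of fixed infinitesimal character and conductor. Then I would subtract the non-cuspidal contributions to $J^G(f^r_s)$, namely the one-dimensional residual spectrum and the continuous Eisenstein spectrum, both of which are expressible in terms of Hecke $L$-functions of idele class characters. These have classical meromorphic continuation and are holomorphic in $\mathrm{Re}(s) > 1$, so under the hypothesis on $J^G(f^r_s)$ the same properties transfer to $L^S(s,\pi,r)$.

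Given holomorphy of $L^S(s,\pi,r)$ in $\mathrm{Re}(s) > 1$ and meromorphic continuation to $\CC$ for every finite-dimensional $r$, generalized Ramanujan for $\pi$ follows by a standard argument: taking $r = \mathrm{Sym}^n$ for each $n\geq 1$ and examining the Dirichlet series via its Euler product, one forces the local Satake parameters of $\pi_v$ onto the unit circle, in the spirit of the criterion of Jacquet--Shalika and Serre. The principal obstacle is the first step: at ramified places one cannot project exactly onto $\pi_v$, so one must argue using finite linear combinations and separate each constituent inductively. A secondary point is verifying that $h_S f^{r,S}_s$ still belongs to $\C(G)$, which is immediate since $f^r_s$ itself lies in $\C(G)$ by construction.
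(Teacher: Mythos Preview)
Your overall shape is right and your final step (holomorphy of $L^S(s,\pi,\mathrm{Sym}^n)$ in $\mathrm{Re}(s)>1$ forces Ramanujan) matches the paper's. The main divergence is in how you isolate $\pi$. The paper does not use pseudo-coefficients at $S$ or subtract the non-cuspidal spectrum by hand; instead it invokes Proposition~\ref{isolation}, the multiplier technique of \cite{isolation}: there is a multiplier $\mu$ of $\C^0(G)$ such that $R(\mu*f)$ lands in the $\pi$-isotypic subspace of $L^2$ while $\pi(\mu*f)=\pi(f)$. Applied to $f=f^r_s$, this kills the residual and Eisenstein contributions in one stroke, so the hypothesis on $J^G$ transfers directly to $\tr\pi(f^r_s)=L^S(s,\pi,r)\cdot\tr\pi_S(f_S)$. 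Lemma~\ref{lembas} is cited to guarantee the archimedean basic functions exist and lie in the right Fr\'echet space.

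The weak point in your route is the claim that the continuous Eisenstein contribution is ``expressible in terms of Hecke $L$-functions'' with classical continuation and holomorphy in $\mathrm{Re}(s)>1$. Each $L(s,I(\chi,it),r)$ does factor into Hecke $L$-functions, but the continuous term is an \emph{integral} over $t\in\R$ of such products, weighted by local traces and intertwining data, and establishing meromorphic continuation of that integral in $s$ for general $r$ is not a triviality you can wave away. This is exactly the analysis the multiplier method is designed to avoid. Your pseudo-coefficient step at $S$ is also more delicate than the multiplier approach, as you yourself note; the paper's argument sidesteps that inductive separation entirely.
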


\noindent Indeed, it was already known  in the works of Langlands, Venkatesh, and Altu\u{g} above that the $r$-trace formula is intimately related to the Ramanujan conjecture. We are simply formulating the connection precisely in this setting.

\section{Poisson apr\`es Langlands}
\label{psf}

\subsection{Measures}\label{meas}

Let us first work with over a local field $F_v$ of $F$.  Fix an invariant differential form $\omega_G$ that is a a generator of the top exterior power of the cotangent bundle on $G$. Following \cite{FLN,gornus}, there are two methods of decomposing the measureon $G$. First, for any maximal torus $T$ of $G$ defined over $F_v$, we also fix an invariant differential form $\omega_T$ of $T$ defined by the characters of $T$. This induces a {\em quotient measure} $d\omega_{T\bs G}$ corresponding to a differential form $\omega_{T\bs G}$ such that $\omega_T\wedge \omega_{T\bs G} = \omega_G$. A standard choice of the the quotient measure when $G$ is unramified over a n $F_v$ nonarchimedean is derived from the canonical measure, which corresponds to a certain measure on a canonical compact subgroup as in \cite{gross}. Closely related to this is the Tamagawa measure on the global product, as both involve normalizations by Artin $L$-functions.

Second, we also have a differential form $\omega_{c^{-1}(a)}$ defined on the fibres of the map $c: G\to \AA$, again characterized by $\omega_\AA\wedge \omega_{c^{-1}(a)} = \omega_G$. It induces a measure on the stable orbit $c^{-1}(c(\gamma))$ for any regular $\gamma \in G(F_v)$, which we call the {\em geometric} measure following \cite{gor}. In general, $c^{-1}(c(\gamma))$ is a union of rational orbits of $\gamma$ in $G(F_v)$, but in our case it consists of a single rational orbit. The measure on $\AA$ is, up to a set of measure zero, given as a disjoint union of measures of stable $F_v$-conjugacy classes of maximal $F_v$-tori in $G(F_v)$, so long as we divide by the order of the Weyl group of each torus and multiply by the Jacobian. We refer to \cite{gornus} for a careful treatment of these notions. 

In any case, we have that for any $f\in \C(G(F_v))$, 
\be\label{Acaloc}
\int_{G(F_v)}f(g) d\omega_G  = \int_{\AA(F_v)} \int_{c^{-1}(a)} f(g) d\omega_{c^{-1}(a)}\ d\omega_\AA,
\ee
which can be compared with the Weyl integration formula in \cite[\S4]{gor} (see also \cite[(24)]{gornus}). Note that in loc. cit. the authors require $f$ to be compactly supported, but the identity holds more generally so long as $f$ is integrable over $G$. 

For our computations in Theorem \ref{main}, we shall work with the normalization of measures used in \cite[\S3.2]{FLN}, including the geometric measure, and the relation to the quotient or canonical measure is explicated in \cite{gornus}. For the purposes of additive Poisson summation, we also fix an additive character $\psi = \prod \psi_v$ of $\A$, which induces a unique measure on each $F_v$ such that the Fourier transform with respect to $\psi_v$ is self-dual \cite{tate}. 

\subsection{Trivial representation}

With this it is a simple matter to compute the trivial representation globally.
\begin{lem}\label{Aca}
 The trivial character can be expressed as
\[
\tr(1(f)) = \int_{G(\A)} f(g)d\omega_G =  \int_{\AA(\A)}\int_{\c^{-1}(a)} f(g) d_\mathrm{geo}g\ da,
\]
where we denote by $d_\text{geo}g$ and $da$ respectively the product of local measures above.
\end{lem}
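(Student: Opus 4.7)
The plan is to treat the two equalities separately, both of which turn out to be essentially formal once the setup of \S\ref{meas} is in place. The first equality is tautological: the trivial representation of $G(\A)^1$ is one-dimensional and acts as the identity, so $1(f)$ is multiplication by the scalar $\int_{G(\A)^1}f(g)\,d\omega_G$, whose trace is itself. The integrability of $f$ on $G(\A)^1$ is built into the definition of $\C(G)$, which lies in $L^1(G(\A)^1)$.

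For the second equality, I would deduce the global formula as a product of the local identities \eqref{Acaloc}. By a standard density and linearity argument, it suffices to treat $f$ of the form $f = \prod_v f_v$ with $f_v \in \C(G(F_v))$ and $f_v = \mathbf{1}_{K_v}$ for almost all $v$, since the Hecke algebra is dense in $\C(G)$ (and extension to $\C(G)$ follows by continuity of both sides in the Fr\'echet topology). At each place $v$, apply \eqref{Acaloc} to obtain
\[
\int_{G(F_v)} f_v(g)\, d\omega_{G,v} = \int_{\AA(F_v)} \int_{c^{-1}(a)} f_v(g)\, d\omega_{c^{-1}(a),v} \, d\omega_{\AA,v}.
\]
Since $d_\mathrm{geo}g$ and $da$ are by construction the restricted products of the local geometric and base measures (as explicated in \S\ref{meas}), Fubini--Tonelli applied to the product of these local identities yields the desired equality globally.

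The step requiring care is the passage from local to global, in particular that the product of local measures converges to a well-defined global measure on $\AA(\A)$ and on the fibers $c^{-1}(a)$, and that the integrand is in $L^1$ for Fubini to apply. Convergence of the product of measures is handled by the normalization conventions of \cite{FLN,gornus}, where canonical measures on $G(F_v)$ and the Artin $L$-factors appearing in them are matched with the corresponding factors on the torus side, ensuring the global measure agrees (up to Tamagawa-style normalization) with $d\omega_G$. The restriction to $G(\A)^1$ rather than $G(\A)$ is absorbed by the suppression of the $\A^\times$ factor of the Steinberg-Hitchin base noted in the introduction, since $f$ is supported where $|\det(g)| = 1$. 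Integrability of the fiber integrals against $da$ is then inherited from the $L^1$ assumption on $f$, completing the proof.
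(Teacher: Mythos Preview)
Your proposal is correct and follows essentially the same approach as the paper: the paper's proof is a one-liner that simply takes the adelic product of the local identities \eqref{Acaloc} (abusing notation to write $\omega_G$ for the global measure) and cites \cite[\S5]{FLN} for the computation. You have filled in the density, Fubini, and normalization details that the paper leaves implicit, but the underlying argument is the same.
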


\begin{proof}
By abuse of notation, we also write $\omega_G$ for the global measure taken as the adelic product of local geometric measures as in \eqref{Acaloc}. This is also computed in \cite[\S5]{FLN}.
\end{proof}

\subsection{The regular elliptic contribution}

Turning to the geometric side of the trace formula, denote the integral with respect to the local geometric measure 
\[
\theta_{f_v}(a)=\int_{c^{-1}(a)} f_v(g) d\omega_{c^{-1}(a)}.
\]
Also, let $D(\gamma)$ be the usual Weyl discriminant of $\gamma$. We have the following relation between measures. 

\begin{lem}\label{lem2}
For any $f_v\in \C(G(F_v)^Z)$, we have
\be
\label{thetaf}
\theta_{f_v}(a) = |D(\gamma)|_v^{\frac12}\int_{T\bs G} f_v(\mathrm{Ad}(g^{-1}) \gamma) d\omega_{T\bs G}.
\ee
If $f\in \H(G,K)$, the function $\theta_f$ is compactly supported and smooth on $\AA(F_v)$ except at two points where it is continuous but nonsmooth.
\end{lem}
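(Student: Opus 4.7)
The first identity is a Jacobian calculation. Since for $G = \GL(2)$ every stable regular semisimple conjugacy class in $G(F_v)$ is a single rational class (by Hilbert 90), the conjugation map $T\backslash G \to c^{-1}(a)$, $g \mapsto \mathrm{Ad}(g^{-1})\gamma$, is a bijection onto the fiber for any regular $\gamma$ with $c(\gamma) = a$. I would pull back $\omega_{c^{-1}(a)}$ along this map and compute the Jacobian at the identity coset: the differential is $\mathrm{Ad}(\gamma^{-1}) - 1$ acting on $\mathrm{Lie}(G)/\mathrm{Lie}(T)$, so with the self-dual measure normalizations of \S\ref{meas} the Jacobian factor is exactly $|D(\gamma)|_v^{1/2}$. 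Combining this with the definition of $\theta_{f_v}$ yields \eqref{thetaf}; a detailed account of the normalizations can be found in \cite{gor,gornus}.

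For the second statement, let $f \in \H(G,K)$ with support contained in a compact set $K_0 \subset G(F_v)$. Compact support of $\theta_f$ is immediate: by continuity of $c$, the integrand defining $\theta_f(a)$ vanishes identically unless $a \in c(K_0)$, and the latter is compact in $\AA(F_v)$. For smoothness on the regular semisimple locus $\AA_\text{rss}(F_v)$, I would use \eqref{thetaf} to identify $\theta_f$ with $|D(\gamma)|_v^{1/2}$ times the ordinary orbital integral of $f$; as $a$ varies in $\AA_\text{rss}(F_v)$ the torus $T$ and the element $\gamma$ can be chosen to depend smoothly on $a$ via a local section of $c$, and the resulting orbital integral is then smooth in $a$. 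This is Harish-Chandra's classical result at archimedean places and its standard analogue at nonarchimedean places.

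The main obstacle lies at the two singular points, corresponding to the non-regular stable classes, i.e.\ the image of the central elements of $G(F_v)$ in the given determinant class. Near these points $|D(\gamma)|_v^{1/2}$ vanishes while the orbital integral of $f$ develops logarithmic-type singularities; the net cancellation leaves $\theta_f$ continuous but not smooth. Working this out in detail requires a careful analysis via Shalika germs or the equivalent, and rather than redo it I would appeal directly to the computations of Langlands in \cite[\S2]{ST}, which treat exactly this case. The restriction on residual characteristic not equal to $2$ enters precisely at this step, and is the source of the corresponding continuity assumption in Theorem \ref{main}.
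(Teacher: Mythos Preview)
Your approach is essentially the same as the paper's: both defer the measure identity \eqref{thetaf} to the computations in \cite{FLN,gornus}, argue compact support directly, and handle the behaviour at the singular points by appeal to Harish-Chandra's bound and the Shalika germ analysis in \cite{ST}. One substantive difference is worth flagging. You conclude by saying that the residual characteristic $2$ restriction enters precisely at the continuity step and leave it as an assumption; the paper instead pushes a bit further in the nonarchimedean case, invoking Kottwitz's explicit Shalika germ expansion \cite[\S5]{harm} (valid in any residual characteristic) together with \cite[\S4.2]{gornus} to obtain continuity of $\theta_{f_v}$ without that restriction. The assumption in Theorem~\ref{main} is therefore not forced by this lemma per se in the paper's treatment. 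Also, your reference to \cite[\S2]{ST} should be to \cite[\S3.1--3.2]{ST}, where the archimedean and nonarchimedean cases are actually treated.
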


\begin{proof}
This identity \eqref{thetaf} relates the geometric measure to the usual quotient measure. It is computed in \cite[(3.31)]{FLN} and \cite[Theorem 3.11]{gornus} for smooth compactly supported test functions, and we note that the computation holds for $\C(G(F_v)^Z)$ since the identity is true so long as the integrals are convergent. Thus $f_v$ determines a function $\theta_f$ on $\AA(F_v)$ that is given by the normalized orbital integral of $f_v$ along the orbit of $\gamma$ in $G(F_v)$.  The orbital integral of a compactly supported test function is again compactly supported, hence $\theta_{f_v}$ is compactly supported on $\AA(F_v)$.

For the singularities of orbital integrals on $G=\GL(2)$, it suffices to consider instead the derived group $G_\text{der} = \mathrm{SL}(2)$. To begin with, it is a fundamental result of Harish-Chandra that the normalized orbital integral 
\[
|D(\gamma)|_v^{\frac12}O(\gamma,f_v)
\]
is bounded and, for nonarchimedean $v$, locally constant for regular semisimple $\gamma$.
Restricting to $f\in \H(G,K)$, the desired properties follow by Langlands' analysis in \cite[\S3.1--3.2]{ST} for real, complex, and nonarchimedean $v$. For archimedean $v$, Langlands shows that $\theta_{f_v}(a)$ is continuous as $a$ approaches the singular set.

In the case of $F_v$ is nonarchimedean, Langlands avoids residual characteristic 2 to make explicit the Shalika germ expansion of $O(\gamma,f_v)$, but as Langlands notes this is well known in general characteristic. For example in \cite[\S5]{harm} Kottwitz explicitly computes for any nonarchimedean local field the expansion at regular elements $\gamma\in \GL_2(\mathcal O_v)$, where $\mathcal O_v$ is the ring of integers of $F_v$, as
\[
O(\gamma,f) =  A_1(\gamma)\mu_1(f) + A_2(\gamma)\mu_2(f).
\]
Here $A_1(\gamma),A_2(\gamma)$ are uniquely determined complex numbers and $\mu_1(f),\mu_2(f)$ are the orbital integrals of $f$ at the unipotent elements
\[
\begin{pmatrix}1&0\\0&1\end{pmatrix},\quad \begin{pmatrix}1&1\\0&1\end{pmatrix}
\]
respectively. Then \cite[\S4.2]{gornus} shows that $\theta_{f_v}$ is continuous at the singularities.
\end{proof}

\subsection{The problem of completion}
\label{comp}

In order to proceed, several remarks are in order. First, in the setup of \cite[\S6]{ST}, Langlands applies several approximations to the analogous sum in  \eqref{aell}: the typical global quotient measure (called the {\em canonical} measure) chosen is scaled by an Artin $L$-value of the algebraic torus $T$ determined by the elliptic conjugacy class. The relation between measures is carefully explicated in \cite{gornus}, and implicitly uses the fact that the Tamagawa number of $T$ is 1 \cite[Remark 5.4]{gornus}. This $L$-value diverges if one evaluates the infinite product at $s=1$ for split tori. Langlands thus begins with a finite product  at Re$(s)>1$ and approximates the $L$-value with 1. But in considering the split terms, Langlands retains this $L$-factor, which precisely diverges when the associated torus splits. This is the point at which Langlands' analysis in \cite{ST} essentially concludes. 

This obstruction appears again for the same reason in \cite{alt1,melk}. Here, the problem is overcome by expressing the $L$-value in terms of a truncated Dirichlet series indexed by an appropriate quadratic symbol (this is the approximate functional equation, a standard tool in analytic number theory). Because the symbol itself is carefully constructed so as to be well-defined for all $a\in \AA(F)$, the resulting sum can be extended to the entire base $\AA(F)$. After these manipulations, the Poisson summation formula was then able to be applied in \cite{alt1,melk} to the image of $\tr(\gamma)$ in $\AA(F)$ accordingly. 

In our case, we are careful to use the geometric measure in the sense of \eqref{thetaf}, rather than the canonical measure, to define the summands and complete the sum, and is the key insight needed for Theorem \ref{main}. It should be understood as an alternate way of extending the sum such that the additional terms do not cause it to diverge. 
We thus complete the summation above by adding and subtracting the missing terms to obtain a sum over a complete lattice. (See also Remark \ref{comp2} below.)

\begin{lem} \label{lem3}
Let $f\in C_c^\infty(G^Z)$. The regular elliptic contribution \eqref{aell} is equal to
\be
\label{aall}
J^G_\mathrm{ell}(f) = \sum_{a\in \AA(F)}\theta_f(a) - \sum_{a\in \AA_\mathrm{spl}(F)}\theta_f(a) - \sum_{a\in \AA_\mathrm{sing}(F)}\theta_f(a),
\ee
where $\theta_f(a) = \prod_{v}\theta_{f_v}(a)$ is a conditionally convergent product.
\end{lem}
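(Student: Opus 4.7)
The plan is to reindex the regular elliptic sum over stable conjugacy classes by the trace map, identify each summand with $\theta_f(a)$ for $a \in \AA_\text{ell}(F)$, and then complete the sum to all of $\AA(F)$ by adding and subtracting the split and singular contributions.

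First, since every stable regular elliptic class in $\GL(2)$ is a single rational class, the trace map (together with the determinant, which we suppress) bijects the index set of the $\gamma$-sum with $\AA_\text{ell}(F)$. Taking the product over all $v$ of the identity of Lemma \ref{lem2} at a representative $\gamma \in c^{-1}(a)$ gives
\[
\prod_v \theta_{f_v}(a) = \Bigl(\prod_v |D(\gamma)|_v^{1/2}\Bigr) \cdot \prod_v \int_{T(F_v)\bs G(F_v)} f_v(g^{-1}\gamma g)\, d\omega_{T\bs G} = O_\gamma(f),
\]
since the first factor equals $1$ by the product formula applied to $D(\gamma) \in F^\times$, and the second factor assembles into the adelic orbital integral with respect to the global quotient measure. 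The centralizers $G_\gamma$ for elliptic $\gamma$ are norm-one tori of quadratic extensions of $F$ and hence have Tamagawa number one (cf.\ \cite[\S5]{FLN} and \cite[Remark 5.4]{gornus}); with the measure normalization of \S\ref{meas} this makes $\vol(G_\gamma(F)\bs G_\gamma(\A)^1)$ compatible with the chosen quotient measure, so that the full summand in the regular elliptic contribution becomes exactly $\theta_f(a)$. This yields
\[
J^G_\text{ell}(f) = \sum_{a \in \AA_\text{ell}(F)} \theta_f(a).
\]

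The completion step \eqref{aall} is then the formal identity obtained by adding and subtracting $\sum_{a \in \AA_\text{spl}(F)} \theta_f(a) + \sum_{a \in \AA_\text{sing}(F)} \theta_f(a)$. For this to be meaningful, $\theta_f$ must be defined on each stratum of $\AA(F)$: it is well-defined on $\AA_\text{rss}(F)$ from Lemma \ref{lem2} (combined with the product argument just given, which applies verbatim to any regular $a$), and extends continuously across $\AA_\text{sing}(F)$ via the Kottwitz germ expansion recalled in that lemma, under the running continuity hypothesis at places of residual characteristic $2$.

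The main obstacle, and the heart of the lemma, is the conditional convergence of $\theta_f(a) = \prod_v \theta_{f_v}(a)$ on the split locus. Here the Euler product is morally the value at $s=1$ of the $L$-function of the associated split torus, which is precisely the divergence that halted Langlands' analysis in \cite{ST}. The key observation, discussed in \S\ref{comp}, is that the geometric measure does not extract this $L$-factor as an overall prefactor—as the canonical measure does—but distributes it across the local factors $\theta_{f_v}$. Consequently, ordering the finite places by norm and taking partial products (equivalently, taking $s \to 1^+$ of the truncated Euler product, which converges absolutely for $\mathrm{Re}(s) > 1$) yields a conditionally convergent value for $\theta_f(a)$. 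This legitimizes the add-and-subtract manipulation above without incurring divergence, and is the new input that distinguishes our approach from the approximate-functional-equation treatment of \cite{alt1,melk}.
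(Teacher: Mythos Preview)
Your proposal is correct and follows essentially the same route as the paper: reindex via the characteristic polynomial, apply Lemma~\ref{lem2} placewise together with the product formula and the Tamagawa-number input (the paper phrases this as the global measure conversion of \cite[Theorem~5.3]{gornus}), obtain \eqref{aell}, and then add and subtract the split and singular strata.

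The only point worth flagging is your final paragraph. The paper does not justify the well-definedness of $\theta_f$ on $\AA_{\mathrm{spl}}(F)$ via an $s\to 1^+$ regularization of an Euler product; indeed, for a split torus the relevant Artin $L$-factor is $\zeta_F(s)$, which has a genuine pole at $s=1$, so that heuristic would still diverge rather than yield a conditionally convergent value. Instead the paper defers to Langlands' direct local analysis in \cite[\S3]{ST}, which establishes that each $\theta_{f_v}$ is smooth across the split locus and that the global product is well-defined (indeed smooth) there. Your intuition that the geometric measure sidesteps the canonical-measure divergence is exactly the content of \S\ref{comp}, but the mechanism is that the geometric normalization makes the local factors genuinely well-behaved (equal to $1$ at almost all places once $a$ is fixed), not that one is regularizing a divergent product by ordering places.
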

\begin{proof}
The typical expression for the regular elliptic part of the trace formula
\[
J^G_\mathrm{ell}(f) =  \sum_{\gamma}\text{vol}(G_\gamma(F)\backslash G_\gamma(\A)^Z) \int_{G_\gamma(\A)\backslash G(\A)} f(x^{-1} \gamma x) dx,
\]
is taken with respect to the canonical measure (c.f. \cite[\S4]{gornus}). The local orbital integral with respect to the canonical measure is simply a rewriting of 
\[
|D(\gamma)|_v^{\frac12}\int_{G_\gamma(F_v)\backslash G(F_v)} f(x^{-1}_v \gamma x_v) dx_v =  |D(\gamma)|_v^{\frac12}\int_{T\bs G} f_v(\mathrm{Ad}(g^{-1}) \gamma) d\omega_{T\bs G}
\]
as in Lemma \ref{thetaf} above. Applying the lemma at each place, as in the measure conversion of \cite[Theorem 5.3]{gornus}, it can be re-expressed in terms of the geometric measure as 
\be
\label{aell}
J^G_\text{ell}(f) = \sum_{a\in \AA_\text{ell}(F)}\theta_f(a), \qquad f\in \C(G^Z)
\ee
which is simply a rewriting of the regular elliptic part of the trace formula (see also \cite[\S4]{FLN}). Note that as usual we have that the product $\prod_v |D(\gamma)|_v=1$, and we supress the factor of $2\pi$ that appears in \cite[Theorem 5.3]{gornus}, which we can by rescaling the archimedean contribution. 

To extend the sum, we use the property that $\theta_f$ is smooth over $\AA_\text{spl}(F)$ and well-defined over $\AA_\text{sing}(F)$ by \cite[\S3]{ST}. This allows us to add and subtract the missing terms from the sum, arriving at the desired expression.
\end{proof} 

\subsection{Convergence of the dual sum}

For the convenience of the reader, we recall Langlands' proof of the convergence of the dual sum. We first consider functions $h_v$ on a local field $F_v$. As usual we fix an additive character $\chi_v$ on $F_v$ and a measure on $F_v$ such that the Fourier transform with respect to $\chi_v$ is self-dual. If we replace $\chi_v$ with $x \mapsto \chi_v(ax)$ for some $a\in F_v^\times$, the Fourier transform $\hat h_v(x)$ becomes $|a|^{1/2}\hat h_v(ax)$. Langlands then obtains the following growth estimates.

\begin{lem}\cite[Lemme 5.1]{ST}
Let $h_v(x)$ be a function on $\R$ such that for $x\neq0$ it is equal to the product of $|x|^{\lambda -1},\lambda>1$ and a compactly supported $\phi$ that is smooth for $x\neq0$ and such that the limit
\[
\lim_{x\to\pm0} \phi^{(n)}(x), \quad n\ge0
\]
and its derivatives exist. Then the Fourier transform 
\[
\hat h(x) = \int_\R h(x) e^{ixy} dx
\]
is $O(|y|)^{-\lambda}$ as $y\to\infty$.
\end{lem}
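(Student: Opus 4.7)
The plan is to reduce to a one-sided problem. Writing $h = h_+ + h_-$ where $h_\pm$ is the restriction of $h$ to $\pm x \ge 0$, the hypothesis that all one-sided derivatives of $\phi$ exist at the origin means each $h_\pm$ has the form $|x|^{\lambda-1}\phi_\pm(x)$ with $\phi_\pm$ smooth on the corresponding closed half-line and compactly supported. By the substitution $y \mapsto -y$ it suffices to treat $h_+$ and prove $\hat h_+(y) = O(|y|^{-\lambda})$ as $|y|\to\infty$.

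Fix a cutoff $\eta \in C_c^\infty(\R)$ with $\eta\equiv 1$ on $[-1,1]$ and $\eta\equiv 0$ off $[-2,2]$. I would split
\[
\hat h_+(y) = \int_0^\infty \eta(xy)\, x^{\lambda-1}\phi_+(x)\, e^{ixy}\, dx + \int_0^\infty (1-\eta(xy))\, x^{\lambda-1}\phi_+(x)\, e^{ixy}\, dx =: I_1(y) + I_2(y).
\]
The first piece is supported in $x \le 2/|y|$, so the trivial estimate
\[
|I_1(y)| \le \|\phi_+\|_\infty \int_0^{2/|y|} x^{\lambda-1}\, dx = O(|y|^{-\lambda})
\]
disposes of it at once; this is the term that encodes the singular behaviour of $|x|^{\lambda-1}$ at the origin.

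For $I_2(y)$, the integrand is supported in $x \ge 1/|y|$, where $x^{\lambda-1}$ and all its derivatives are finite. The plan is to integrate by parts $N$ times using $e^{ixy} = (iy)^{-1}\partial_x e^{ixy}$. The boundary term at $x = \infty$ vanishes by the compact support of $\phi_+$, and the boundary term at $x = 0$ vanishes because $1-\eta(xy) = 0$ there. After $N$ integrations one is reduced to estimating $|y|^{-N}$ times an integral whose integrand, expanded via the Leibniz rule, is a linear combination of terms of the form
\[
y^a\, \eta^{(a)}(xy)\, x^{\lambda-1-b}\, \phi_+^{(c)}(x), \qquad a+b+c = N.
\]
Splitting into the case $a > 0$ (which forces $x \asymp 1/|y|$ through the support of $\eta^{(a)}$) and the case $a = 0$ (where $x \in [1/|y|, R]$ with $R$ the support radius of $\phi_+$), a direct accounting shows each term contributes $O(|y|^{-\lambda})$ provided $N$ is chosen larger than $\lambda$.

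The main obstacle—really just the main bookkeeping point—is the worst-case pattern $a = c = 0$, $b = N$: there the integral $\int_{1/|y|}^R x^{\lambda-1-N}\,dx$ evaluates to $(|y|^{N-\lambda} - R^{\lambda-N})/(N-\lambda)$, so the lower boundary yields precisely the factor $|y|^{N-\lambda}$ that cancels $|y|^{-N}$ to leave $|y|^{-\lambda}$. This matches the heuristic that the rate of decay of $\hat h$ is dictated by the strength of the singularity of $h$ at $0$, the compact support of $\phi$ ensuring that nothing worse is contributed from infinity. The negative half of $y$ is handled symmetrically, completing the argument.
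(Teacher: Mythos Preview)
The paper does not actually prove this lemma; it is quoted verbatim from Langlands \cite[Lemme 5.1]{ST} as one of several growth estimates feeding into the convergence of the dual Poisson sum, so there is no in-paper argument to compare against. Your proof is correct and follows the standard method for Fourier decay of functions with an isolated power singularity: split at scale $1/|y|$ via a cutoff $\eta(xy)$, bound the near-zero piece trivially using the integrability of $x^{\lambda-1}$, and integrate by parts $N>\lambda$ times on the remainder. The bookkeeping you highlight in the $a=c=0$, $b=N$ term is precisely where the exponent $\lambda$ appears, and you have tracked it correctly. One small wording point: the reduction from $h_-$ to an $h_+$-type problem is effected by the substitution $x\mapsto -x$ (which replaces $\phi$ by $\phi(-\,\cdot\,)$ and flips the sign of $y$ in the exponential), not by $y\mapsto -y$ alone; this is cosmetic and does not affect the argument.
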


\begin{lem}\cite[Lemme 5.2]{ST}
Let $h_v(z)$ be a function on $\mathbb C$ that is smooth and compactly supported for $z\neq 0$, and in a neighborhood of $z=0$ it is equal to the product of $|z|^{\lambda -1},\lambda>1$ and a smooth function. Then the Fourier transform 
\[
\hat h_v(w) = \int_\mathbb C h_v(z) e^{i\mathrm{Re}(zw)} dz
\]
is $O(|w|)^{-\lambda}$ as $|w|\to\infty$.
\end{lem}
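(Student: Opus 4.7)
The plan is to reduce the two-dimensional statement to the preceding one-dimensional lemma by partial integration along one real direction. First I would localize: write $h_v = h_0 + h_\infty$ where $h_0 = \chi\, h_v$ for a smooth cutoff $\chi$ equal to $1$ near the origin and vanishing outside, say, $|z|\leq 2$. Then $h_\infty = (1-\chi)h_v$ is smooth and compactly supported on all of $\CC$, so $\hat h_\infty$ is Schwartz and the task reduces to controlling $\hat h_0(w)$, where $h_0(z) = \chi(z)|z|^{\lambda-1}\psi(z)$ for a smooth function $\psi$.

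Given $w\in\CC^\times$, I would rotate the integration variable so that $w = |w|$ is real and positive, write $z = x+iy$, and perform the $y$-integration first. This yields $\hat h_0(|w|) = \int_\R H(x)\,e^{ix|w|}\,dx$ with
\[
H(x) = \int_\R \chi(x+iy)\,|x+iy|^{\lambda-1}\,\psi(x+iy)\,dy.
\]
The substitution $y = |x|t$ shows that $H$ is compactly supported, smooth away from $x = 0$, and admits an asymptotic expansion near $x = 0$ whose leading singular behavior is $|x|^{\lambda}$ times a smooth function and all of whose subleading corrections are smoother. Applying the preceding one-dimensional lemma to the piece $|x|^{(\lambda+1)-1}\phi(x)$ (with exponent $\lambda'=\lambda+1>1$) gives Fourier decay $O(|w|^{-\lambda-1})$, while the smooth part of $H$ contributes Schwartz-rate decay; together this yields the claimed bound.

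The main technical obstacle is the extraction of the asymptotic expansion of $H$ in usable form: one must verify that the inner integral $|x|^\lambda\int_{-1/|x|}^{1/|x|}(1+t^2)^{(\lambda-1)/2}\,dt$, after subtracting its constant limit and the smooth Taylor corrections in even powers of $x$, decomposes cleanly into terms of the form $|x|^{\lambda+2k}$, each of which factors through $|x|^\lambda$ because $|x|^{2k} = x^{2k}$ is smooth. This requires expanding $(1+t^2)^{(\lambda-1)/2}$ in inverse even powers of $t$ and some bookkeeping when $\lambda$ is an even integer (where logarithmic terms appear), but the bound $O(|w|^{-\lambda})$ survives in all cases. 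As a remark, this bound is not sharp: the distributional Fourier transform of $|z|^{\lambda-1}$ on $\CC\cong\R^2$ is a constant multiple of $|w|^{-\lambda-1}$, so the optimal decay is one power stronger, but only the weaker bound is required for the convergence of the dual sum.
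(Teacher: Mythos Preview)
The paper does not prove this lemma; it is simply quoted from Langlands \cite{ST} (without proof) as one of several preparatory estimates recalled for the convergence of the dual Poisson sum, so there is no in-paper argument to compare your proposal against.

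Your reduction to the one-dimensional lemma via rotation and integration in $y$ is a sound strategy and, as you observe at the end, actually yields the sharper decay $O(|w|^{-\lambda-1})$. One point deserves to be made explicit, however: after rotating so that $w$ is real and positive, the inner integral $H(x)$ depends on the argument $\alpha=\arg w$ through the rotated smooth factor $\psi(e^{-i\alpha}\,\cdot\,)$ and the rotated cutoff. Consequently the constants appearing in the asymptotic expansion of $H$, in the application of the one-dimensional lemma to its singular part, and in the Schwartz estimates for its smooth remainder all depend on $\alpha$. To obtain the stated bound $O(|w|^{-\lambda})$ as $|w|\to\infty$ (rather than merely along each ray), you need these constants to be uniform in $\alpha\in S^1$. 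This follows from compactness of $S^1$ and the continuous dependence of the relevant seminorms on $\alpha$, but it should be stated; with that addition the argument is complete.
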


\begin{lem}\cite[Lemme 5.3]{ST}
Let $h_v(x)$ be a function of $F_v$ nonarchimedean given by the product of $|x|^{\lambda -1},\lambda>1$ and a smooth function $\phi$ whose support is the intersection of a compact open of $F_v$ and a class in $F_v^\times/(F^\times)^2$. Then the Fourier transform 
\[
\hat h_v(x) = \int_{F_v} h_v(x) \chi(xy) dx
\]
is $O(|y|)^{-\lambda}$ as $y\to\infty$.
\end{lem}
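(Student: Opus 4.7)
The plan is a dyadic decomposition in the variable $x$ combined with a vanishing argument on each annular shell, which is the nonarchimedean analogue of the usual oscillation-decay estimates over $\R$. Since $|x|_v^{\lambda-1}$ is constant on each shell $S_n = \{x \in F_v : |x|_v = q^{-n}\}$ (with $q$ the residue cardinality of $F_v$), I would split the Fourier integral as
\[
\hat h_v(y) = \sum_{n \geq n_0} q^{-n(\lambda-1)} J_n(y), \qquad J_n(y) := \int_{S_n}\phi(x)\chi(xy)\,dx,
\]
where the sum starts at some $n_0$ reflecting the compact support of $\phi$. The idea is to show that for $|y|_v$ large, the shell integrals $J_n(y)$ vanish for all $n$ below a threshold $N(y) \sim \log_q|y|_v$, so that only the tail of shells accumulating at the singularity $x=0$ contributes.

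For the vanishing step, I would exploit the local constancy of $\phi$: since $\phi$ is smooth with support inside $\xi(F_v^\times)^2 \cap U$ for some compact open $U$, the restriction $\phi|_{S_n}$ is invariant under additive translations by $\varpi^{s_n}\mathcal{O}_v$ (with $\varpi$ a uniformizer) for a level $s_n$ depending linearly on $n$, with a bounded additive shift accounting for the square-class structure and the conductor of $\chi$. By standard Fourier duality on $F_v$, this forces $J_n(y) = 0$ as soon as $|y|_v$ exceeds $q^{s_n}$ times a constant coming from the conductor of $\chi$. The vanishing thus kicks in uniformly for all $n \leq N(y) := \lceil \log_q|y|_v \rceil - C$ with a fixed constant $C$. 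The surviving tail $n > N(y)$ is then bounded trivially using $|J_n(y)| \leq \|\phi\|_\infty \vol(S_n) \ll q^{-n}$ and summation of the resulting geometric series, giving
\[
|\hat h_v(y)| \ll \sum_{n > N(y)} q^{-n\lambda} \ll q^{-N(y)\lambda} \ll |y|_v^{-\lambda},
\]
as desired.

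The main obstacle I anticipate is tracking the precise dependence of $s_n$ on $n$, the conductor of $\chi$, and the square-class index, particularly in residual characteristic $2$ where $[F_v^\times:(F_v^\times)^2] > 4$ and the intersection of $\xi(F_v^\times)^2$ with a shell admits a more intricate ball decomposition. Since only linear growth $s_n = n + O(1)$ is needed for the factor $q^{-n(\lambda-1)}$ to beat $\vol(S_n)$ via the integrability of $|x|_v^{\lambda-1}$ near the origin, and this linear growth is automatic for $\phi$ arising from a smooth function with fixed level of bi-invariance, the argument completes. Conceptually the role of the square-class assumption is only to guarantee that $\phi$ has well-defined local-constancy levels on each shell; the decay rate itself is dictated purely by the power-type singularity at zero.
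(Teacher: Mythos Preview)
The paper does not prove this lemma; it merely records the statement with the citation \cite[Lemme~5.3]{ST} and moves on, so there is no in-paper argument to compare against directly. Your dyadic-shell approach is the standard and correct route for such nonarchimedean decay estimates, and is in the spirit of what Langlands does in the cited reference.

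Your argument is sound, and the one point you flag as an obstacle is in fact milder than you suggest. The linear growth $s_n = n + O(1)$ follows immediately once you observe that the relevant smoothness of $\phi$ is \emph{multiplicative}: the function $\phi$ is invariant under multiplication by $1+\varpi^k\mathcal{O}_v$ for some fixed $k$ (this is what ``smooth with support in a single square class'' means in practice, and is what holds for the normalized orbital integrals Langlands has in mind). On the shell $S_n$ this translates into additive invariance by $\varpi^{n+k}\mathcal{O}_v$, giving $s_n = n+k$ exactly. The residue-characteristic-$2$ issue affects only the finite index $[F_v^\times:(F_v^\times)^2]$ and hence the number of balls needed to cover $\xi(F_v^\times)^2\cap S_n$; it changes the implied constant but not the mechanism or the exponent $\lambda$.
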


The purpose of the previous results if the following convergence condition.

\begin{lem}\cite[Lemme 5.4]{ST}\label{dual}
Let $S$ be a finite set of valuations of $F$ and $h = \prod_{v\in S} h_v$. If there exists positive constants $c,d$ such that $|h_v(a)|\le c \min(1,|a|^{-1-d}_v)$, for all $a\in F_v$ and $v\in S$, then
\[
\sum_{a\in F_S}|h(a)|<\infty.
\] 
\end{lem}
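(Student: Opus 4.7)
The plan is to dominate the discrete sum $\sum_{a\in F_S}|h(a)|$ by a continuous integral on $V=\prod_{v\in S}F_v$ of a factorizable majorant, and then reduce to a product of local integrals each of which visibly converges because $d>0$. In effect, the preceding growth estimates are set up precisely so that this majorant argument goes through.

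First, the hypothesis yields $|h(a)|\le H(a)$ with $H(x) = c^{|S|}\prod_{v\in S}\min(1,|x_v|_v^{-1-d})$. Enlarging $S$ if necessary (which is harmless, since each extra local factor is bounded by $1$), I may assume $S$ contains all archimedean places of $F$, so that the $S$-integers $F_S$ embed as a discrete cocompact lattice in $V$ with a fundamental domain $\mathcal D$ of finite covolume. I would then perform a Riemann-type comparison of the lattice sum with $\int_V H$: the function $H$ is monotone nonincreasing in each coordinate $|x_v|_v$ outside a fixed bounded region $B\subset V$ strictly containing the polydisc where all $|x_v|_v\le 1$. Splitting $F_S = (F_S\cap B)\sqcup(F_S\setminus B)$, the first set is finite and contributes a bounded sum; for $a\in F_S\setminus B$, the monotonicity gives $H(a)\le C\inf_{x\in a+\mathcal D}H(x)$ for a constant $C$ depending only on $\mathcal D$, so
\bee
\sum_{a\in F_S\setminus B}H(a)\le \frac{C}{\vol(\mathcal D)}\int_V H(x)\,dx = \frac{Cc^{|S|}}{\vol(\mathcal D)}\prod_{v\in S}\int_{F_v}\min(1,|a|_v^{-1-d})\,da.
\eee

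Next, I would verify that each local integral is finite. Over $\{|a|_v\le 1\}$ the integrand is $1$ and the volume is finite. Over $\{|a|_v>1\}$, archimedean places are handled by polar coordinates (a real or complex integral whose convergence is equivalent to $d>0$), while nonarchimedean places decompose as a geometric series indexed by $|a|_v\in q_v^{\Z_{\ge 1}}$ with ratio $q_v^{-d}$, again convergent for $d>0$. Each factor is therefore finite and the full product converges.

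The main obstacle I expect is the discrete-to-continuous comparison on the region $B$, where $H$ is essentially constant and the pointwise monotonicity used above breaks down. This is circumvented by absorbing the lattice points in $B$ into the finite ``near'' sum and applying the monotonicity argument only in the tail, as above. Once this separation is made, the rest reduces to local convergence analyses of the type already performed for single factors in the preceding lemmas.
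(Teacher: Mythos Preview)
The paper does not prove this lemma; it is quoted from \cite[Lemme 5.4]{ST} without argument and invoked as a black box for the convergence of the dual Poisson sum. There is thus nothing in the paper itself to compare your proposal against.

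Your lattice-sum-versus-integral argument is correct and is the standard way to establish such a bound. Two small imprecisions are worth noting. First, the reason enlarging $S$ is harmless is not merely that the extra majorant factors are bounded by $1$, but that they are \emph{equal} to $1$ on the original $F_S$ (elements of $F_S$ being integral at the added places), so the old and new majorants agree there; one then passes to the larger lattice $F_{S'}\supset F_S$ by positivity. Second, the splitting off of the bounded region $B$ is not strictly needed: for nonarchimedean $v$ the factor $\min(1,|x_v|_v^{-1-d})$ is exactly constant on each coset $a_v+\mathcal O_v$ by the ultrametric inequality, and for archimedean $v$ one checks directly that the ratio of the factor at $a_v$ to that at any $a_v+d_v$ with $|d_v|\le R_v$ is bounded by $(1+R_v)^{1+d}$ uniformly in $a_v$. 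Hence $H(a)\le C\inf_{x\in a+\mathcal D}H(x)$ already holds for every lattice point, and the whole sum is dominated by $C\,\vol(\mathcal D)^{-1}\int_V H$, which factors into the convergent local integrals you describe.
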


\noindent This will provide the convergence of the dual sum in the Poisson formula below.

\subsection{Poisson summation}

We first recall the adelic Poisson summation formula as originally established by Tate.

\begin{lem}\cite[Lemma 4.2.4]{tate}
\label{PSF}
Let $\varphi$ be a function on $\A$ such that 
\ben
\item
$\varphi$ is continuous and belongs to $L^1(\mathbb A)$, 
\item
for all $y\in F\backslash \mathbb A$,
\[
\sum_{x \in F}\varphi(x+y)
\]
is uniformly convergent, and 
\item the dual sum converges absolutely
\[
\sum_{x\in F}|\hat \varphi(x)|<\infty.
\]
\een
Then 
\[
\sum_{x \in F }\varphi(x) = \sum_{y\in F} \hat\varphi(y).
\]
\end{lem}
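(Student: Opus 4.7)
\medskip

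\noindent\textbf{Proof proposal.} The plan is the classical one due to Tate: turn the sum into a function on the compact quotient $F\backslash\A$ and expand it in a Fourier series. I would proceed as follows.

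First, for $y\in\A$, define
\[
\Phi(y) = \sum_{x\in F}\varphi(x+y).
\]
Assumption (1) guarantees $\varphi\in L^1(\A)$ so that $\hat\varphi$ is well-defined; assumption (2) says the series for $\Phi$ is uniformly convergent on the compact quotient $F\backslash\A$, so $\Phi$ descends to a continuous function on $F\backslash\A$. (By continuity of each translate $x\mapsto \varphi(x+y)$ and uniform convergence, continuity of $\Phi$ follows.)

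Next, I would invoke harmonic analysis on the compact abelian group $F\backslash\A$. Its Pontryagin dual is identified with $F$ via the character $\psi$: for each $\xi\in F$, the character $y\mapsto\psi(\xi y)$ is trivial on $F$ (since $F$ is a discrete cocompact subgroup and $\psi$ is chosen so that $F$ is its own annihilator, by the self-duality set up in \S\ref{meas}). Fix the unique Haar measure on $F\backslash\A$ of total mass $1$ (equivalently, the quotient of the self-dual measure on $\A$). The Fourier coefficients of $\Phi$ at $\xi\in F$ are
\[
\widehat\Phi(\xi) = \int_{F\backslash\A}\Phi(y)\overline{\psi(\xi y)}\,dy = \int_{F\backslash\A}\sum_{x\in F}\varphi(x+y)\overline{\psi(\xi y)}\,dy,
\]
and using uniform convergence from (2) together with $F$-invariance of $\psi(\xi\cdot)$ one may unfold the sum into a single integral over $\A$, obtaining $\widehat\Phi(\xi)=\hat\varphi(\xi)$. (This unfolding is the standard computation; the regularity in (1) and (2) is exactly what makes the interchange legitimate.)

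Finally, by hypothesis (3), the Fourier series $\sum_{\xi\in F}\hat\varphi(\xi)\psi(\xi y)$ converges absolutely and uniformly in $y$, and hence defines a continuous function on $F\backslash\A$ whose Fourier coefficients agree with those of $\Phi$. Since continuous functions on a compact abelian group are determined by their Fourier coefficients (e.g.\ via Stone--Weierstrass or the uniqueness part of Peter--Weyl), we conclude $\Phi(y)=\sum_{\xi\in F}\hat\varphi(\xi)\psi(\xi y)$ pointwise. Setting $y=0$ yields
\[
\sum_{x\in F}\varphi(x)=\Phi(0)=\sum_{\xi\in F}\hat\varphi(\xi),
\]
which is the claimed identity. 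The main technical point---and essentially the only place where real work is hidden---is the justification of the unfolding step that identifies $\widehat\Phi(\xi)$ with $\hat\varphi(\xi)$; everything else is a formal consequence of the three hypotheses, which are precisely tailored to make this unfolding and the subsequent Fourier inversion on $F\backslash\A$ valid.
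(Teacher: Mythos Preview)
Your argument is correct and is precisely Tate's original proof: periodize, identify Fourier coefficients by unfolding, and invoke Fourier inversion on the compact quotient $F\backslash\A$. The paper itself does not supply a proof of this lemma---it simply cites \cite[Lemma 4.2.4]{tate} and moves on---so there is nothing to compare beyond noting that you have reproduced the cited source's argument faithfully.
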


\noindent In particular, there is no requirement for $\varphi$ to be smooth or even differentiable.

With these preparations, we thus arrive at the following.

\begin{prop}
Let $f\in C_c^\infty(G^Z)$. Then the sum \eqref{aall} is well-defined, and is equal to
\be\label{theta}
\sum_{a\in \AA(F)}\hat\theta_f(a) - \sum_{a\in \AA_\mathrm{spl}(F)}\theta_f(a) - \sum_{a\in \AA_\mathrm{sing}(F)}\theta_f(a)
\ee
\end{prop}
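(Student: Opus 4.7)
The strategy is to apply Tate's adelic Poisson summation formula (Lemma \ref{PSF}) to the function $\theta_f$ on $\AA = \A$, leaving the correction terms over $\AA_{\mathrm{spl}}(F)$ and $\AA_{\mathrm{sing}}(F)$ untouched. Concretely, once we replace $\sum_{a\in \AA(F)}\theta_f(a)$ in \eqref{aall} by $\sum_{a\in \AA(F)}\hat\theta_f(a)$, we obtain \eqref{theta}, and the remaining two sums in \eqref{aall} persist as written. So the proof reduces to checking that $\theta_f$ satisfies the three hypotheses of Lemma \ref{PSF}, plus the a priori convergence of \eqref{aall} itself.

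First I would verify hypothesis (1), continuity and $L^1$. Locally, Lemma \ref{lem2} gives that $\theta_{f_v}$ is compactly supported and continuous on $\AA(F_v)$ for $f_v\in \H(G,K)$ (smooth away from the two singular points, continuous everywhere). Globally, for $f = \prod_v f_v\in C_c^\infty(G)$, one has $\theta_{f_v} = \theta_{f_v^0}$ for almost all $v$, where $f_v^0$ is the unit in the spherical Hecke algebra; a standard unramified computation (see \cite{gornus}) shows that $\theta_{f_v^0}$ is the characteristic function of $\OO_v$ times a bounded factor. Hence $\theta_f = \prod_v \theta_{f_v}$ is continuous on $\A$ and supported in a compact subset of $\A$, proving (1) and also confirming that the sum $\sum_{a\in \AA(F)}\theta_f(a)$ in \eqref{aall} has only finitely many nonzero terms at each step of the usual exhaustion, giving hypothesis (2) as well (compact support reduces uniform convergence to a statement over a lattice intersected with a compact set).

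The main work is hypothesis (3), absolute convergence of the dual sum. By Lemma \ref{lem2}, near each singular point $a_0\in \AA_{\mathrm{sing}}(F_v)$ the function $\theta_{f_v}$ has, via Kottwitz's Shalika germ expansion, a singularity of the form $|a-a_0|_v^{\lambda-1}$ with $\lambda = 1$ for the nonsingular germ and a genuine $\lambda>1$ behavior after multiplication by $|D|_v^{1/2}$, together with the smooth/locally constant compactly supported factor $\phi$ demanded by Lemmes 5.1--5.3 of \cite{ST}. Applying those three lemmas at real, complex, and nonarchimedean places respectively yields a bound of the form $|\hat\theta_{f_v}(y)| \le c_v \min(1,|y|_v^{-1-d})$ for a uniform $d>0$, at each $v$ in the finite set $S$ where $f_v$ is ramified; at the remaining places, $\hat\theta_{f_v}$ is again a characteristic function of a compact (up to a bounded factor) by the unramified calculation and self-duality of the measure. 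Lemma \ref{dual} then delivers $\sum_{a\in F}|\hat\theta_f(a)| < \infty$.

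With (1)--(3) in hand, Tate's formula gives $\sum_{a\in \AA(F)}\theta_f(a) = \sum_{a\in \AA(F)}\hat\theta_f(a)$, and substituting into \eqref{aall} yields \eqref{theta}. The principal obstacle is hypothesis (3): the singularities of orbital integrals have to be matched exactly against the hypotheses of Lemmes 5.1--5.3 of \cite{ST}, which is precisely why we isolate the $C_c^\infty(G)$ case here and postpone the extension to $\C(G)$ to Section \ref{cont}; in residual characteristic $2$ the Shalika germ form is not written explicitly in \cite{ST}, but Kottwitz's computation \cite{harm} recalled in the proof of Lemma \ref{lem2} supplies exactly the needed input.
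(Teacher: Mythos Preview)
Your proposal is correct and follows essentially the same approach as the paper: apply Tate's Poisson summation (Lemma~\ref{PSF}) to the first sum in \eqref{aall}, verifying condition (1) via Lemma~\ref{lem2}, condition (2) via compact support of $\theta_f$, and condition (3) via Lemma~\ref{dual} (which in turn rests on Lemmes~5.1--5.3 of \cite{ST}). Your sketch is in fact more detailed than the paper's own proof, which simply points to these lemmas; the only minor wobble is your description of the germ exponents (recall $\theta_{f_v}$ already carries the $|D|_v^{1/2}$ factor, so the relevant $\lambda>1$ structure is intrinsic to $\theta_{f_v}$ itself rather than appearing ``after multiplication''), but this does not affect the argument.
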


\begin{proof}
The proof follows from the preceding discussion and verifying the conditions of Lemma \ref{PSF} to the first term. The first is given by Lemma \ref{lem2}, the second follows from the compact support of $\theta_f$, and the third is given by Lemma \ref{dual}.
\end{proof}

\noindent The main Theorem \ref{main} for $f\in C_c^\infty(G^Z)$ then follows from the above formula and Lemmas \ref{Aca} and \ref{lem3}.

\subsection{Completion of main theorem}
\label{cont}

We want to extend Theorem \ref{main} to $f\in\C(G^Z)$. Let us denote the distribution defined by \eqref{theta} as $f\to \theta(f)$. Finis and Lapid \cite[Theorem 1]{FLGL2} show that the original expression \eqref{aell} holds for $f\in\C(G^Z)$ by establishing the existence of a continuous seminorm on $\C(G^Z)$ that extends $C_c^\infty(G^Z)$. To see the same for the dual sum, we shall show that the same holds. 

\begin{prop}
$\theta(|f|)$ is a continuous seminorm on $\C(G^Z)$.
\end{prop}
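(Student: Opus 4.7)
The plan is to split $\theta(|f|)$ into the three non-negative sums
\[
\sum_{a\in\AA(F)}|\hat\theta_f(a)|, \qquad \sum_{a\in\AA_\mathrm{spl}(F)}|\theta_f(a)|, \qquad \sum_{a\in\AA_\mathrm{sing}(F)}|\theta_f(a)|,
\]
and to bound each by a continuous seminorm on $\C(G)$. Since a finite sum of continuous seminorms is again one, the proposition will follow.

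For the two sums over $\AA_\mathrm{spl}(F)$ and $\AA_\mathrm{sing}(F)$, I would adapt the Finis--Lapid argument for the elliptic sum from \cite[Thm.~1]{FLGL2} essentially verbatim. The summand $\theta_f(a)$ is the same normalized orbital integral in all three cases, so their uniform local bounds at the archimedean and ramified nonarchimedean places transfer directly; at the remaining places bi-$K$-invariance forces $\theta_{f_v}$ to be supported on a fixed compact subset of $\AA(F_v)$ independent of $f$, so the global sums have the same shape as the elliptic one and inherit the continuous seminorm estimate. Only finitely many split or singular classes contribute at a given level of ramification, which makes this step more rather than less tractable than the elliptic case.

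For the dual sum, I would combine Langlands' local Fourier estimates \cite[Lemmes 5.1--5.3]{ST}, which yield $|\hat\theta_{f_v}(a_v)|\le c_v(f_v)\min(1,|a_v|_v^{-1-d})$ at each place, with Lemma \ref{dual} to obtain adelic summability. What remains is to exhibit each $c_v(f_v)$ as the value at $f_v$ of a continuous seminorm on $\C(G(F_v))$. At the archimedean place this reduces via integration by parts to uniform bounds on $\theta_{f_\infty}$ and its derivatives on the smooth pieces of $\AA(F_\infty)$, each of which is controlled by a seminorm $\|f_\infty * X\|_1$ for some $X$ in the universal enveloping algebra---exactly the seminorms defining the Fr\'echet topology on $\C(G)$. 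At the finitely many ramified nonarchimedean places, bi-$K$-invariance reduces the problem to a finite-dimensional one; at almost all remaining places $c_v(f_v)$ may be taken to be $1$ by the unramified unit bound, and the global constant is recovered as a finite product.

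The main obstacle I anticipate is the archimedean step: Langlands' analysis in \cite[\S3]{ST} is carried out for $f\in C_c^\infty(G)$, where compact support conveniently tames $\theta_f$ near the singular set. Extending his decay estimates to $\C(G)$ requires tracking the Fr\'echet seminorms explicitly through the integration-by-parts arguments, so that the implicit constants depend continuously on $f$ rather than on the support. Once this dependence is made uniform, density of $\H(G,K)$ in $\C(G)$ allows the bound to pass to all of $\C(G)$ and closes the argument.
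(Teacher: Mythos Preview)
Your treatment of the split and singular sums is essentially what the paper does (it appeals to the hyperbolic rather than the elliptic estimate in \cite{FLGL2}, but the spirit is the same). For the dual sum, however, the paper takes a much shorter route that bypasses the obstacle you flag. Instead of working with Langlands' local Fourier estimates on $\hat\theta_{f_v}$ and tracking the constants $c_v(f_v)$ through the singularity analysis, the paper undoes the fiber integration via the Weyl integration formula \eqref{Acaloc}, rewriting
\[
\hat\theta_{|f|}(a)=\int_{\AA(\A)}\psi(ab)\int_{c^{-1}(b)}|f(g)|\,d_{\mathrm{geo}}g\,db=\int_{G(\A)^1}\psi(a\cdot\tr(g))\,|f(g)|\,dg.
\]
This pulls the problem back to $G(\A)^1$, where the Finis--Lapid reduction to test functions $f=f_\infty\otimes T_{r,N}$ with $T_{r,N}$ a characteristic function of a $K_f$-double coset applies directly; one then reads off the bound $O(N\,\|f_\infty\|_1)$ from the volume of the double coset. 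The orbital-integral singularities and the behaviour of $\theta_{f_\infty}$ near the singular set never enter, so the extension from $C_c^\infty$ to $\C(G)$ that you identify as the main difficulty simply does not arise. Your approach is not wrong, but it commits you to redoing \cite[\S3,5]{ST} with uniform seminorm dependence, which is substantially more work than the paper's one-line reduction.
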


\begin{proof}
The statement for the second and third terms in \eqref{theta} follow quickly from a simpler version of \cite[p.389]{FLGL2} which bounds the actual hyperbolic contribution to the trace formula. The main term to consider is the first sum
\[
\sum_{a\in \AA(F)}\hat\theta_f(a) = \sum_{a\in\AA(F)}\int_{\AA(\A)} \psi(ab) \int_{c^{-1}(b)} |f(g)| d_\text{geo}g \ db,
\]
where $\psi$ is a continuous additive character on $\AA(\A)$ that is self-dual with respect to the measure on $\AA(\A)$. We may take for example the product of local characters $\psi_v(a_v) = e^{-2 \pi i a_v}$. 
As in \eqref{Acaloc}, by the Weyl integration formula this is equal to 
\be
\label{psia}
\sum_{a\in\AA(F)} \int_{G(\A)^Z} \psi(a \cdot \tr(g))|f(g)|dg,
\ee
where we note that we can view $\tr(g)$ here as the trace for the standard representation of $G$.

Let $\A_f = \prod_{v<\infty} F_v$ be the product of finite adeles, and let $K_f = \prod_{v<\infty} K_v$, where $K_v$ is a maximal compact subgroup of $G(F_v)$. Since we are working adelically, we can assume now for simplicity that $F=\Q$. Let $T_{r,N}$ be the characteristic function of the double coset 
\[
K_f \begin{pmatrix} r & 0 \\ 0 & r \end{pmatrix} \begin{pmatrix} N & 0 \\ 0 & 1 \end{pmatrix}  K_f,
\]
which forms a set of representatives of $K_f \bs G(\A_f) / K_f$ for all positive $r\in \Q$ and $N\ge1$.  Following \cite[\S4]{FL}, it suffices to show that there exists a continuous seminorm $\mu$ on $\C(G(\R)^1)$ such that the expression $\theta(|f|)$  for 
\[
f(g) = f_\infty(g_\infty)T_{r,N}(g_f),\qquad  g_\infty \in G(\R)^1, g_f \in G(\A_f), f_\infty\in \C(G(\R)^1)
\]
is bounded by $\mu(f_\infty)$ times a constant depending at most on $r,N$. Using the fact that  
\[
\int_{G(\A_f)}T_{r,N}(g)dg = \text{vol}(K_f \begin{pmatrix} r & 0 \\ 0 & r \end{pmatrix} \begin{pmatrix} N & 0 \\ 0 & 1 \end{pmatrix}  K_f)=N \prod_{p|N}\left(1+\frac1p\right)
\]
we see that for $f$ as above, the sum \eqref{psia} is equal to $O( N ||f_\infty||_1)$.
\end{proof}

\section{Poisson apr\`es Matz}
\label{matz}

\subsection{Parametrizing the elliptic terms}

We now turn to the second method, inspired by the work of Matz. Recall that the kernel of the integral operator used to define the trace formula can be written as
\[
\int_{C(\A)G(F)\backslash G(\A)} K(x,x) dx = \int_{C(\A)G(F)\backslash G(\A)}\sum_{\gamma \in G(F)} f(x^{-1}\gamma x) dx, 
\]
where $C$ is the center of $G$. (We switch conventions here and consider $\bar G(\A) = G(\A)/C(\A)$ in place of $G(\A)^Z$ above. See \cite[\S6]{knapp} for passing between the two.) In general, the integral does not converge when $G$ is not anisotropic, in which case it is necessary to regularize the integral. In our case, it is enough to use the naive truncation as in \cite[(6.4)]{GJ} for some parameter $c>0$ tending to infinity,
\be
\label{kernel}
J^G(f)=\tr(R(f)) = \int_{C(\A)G(F)\backslash G(\A)} \Lambda^c_2 K(x,x) dx,
\ee
where the truncated kernel $\Lambda^c_2 K(x,x)$ is equal to
\be
\label{kernel2}
\sum_{\gamma \in G(F)} f(x^{-1}\gamma x) - \sum_{\xi\in P(F)\bs G(F)} \int_{N(\A)} \sum_{\gamma\in G(F)}f(x^{-1}\xi^{-1}\gamma n \xi x) \chi_c(H(\xi x)) dn
\ee
referring to \cite{GJ} for notation. The lefthand side of \eqref{kernel} as written does not depend on $c$, because in practice either one takes the limit as $c$ tends to infinity or evaluates at the distinguished value $c=1$. 

The main observation here is that we can apply Poisson summation to the first sum on the level of {\em functions} on $G$, as was done by Matz in \cite{matz0}. The test functions used by Matz are of the form
\be
\label{basics}
f_s^\text{std}(g) = \int_{\A^\times}|\det(ag)|^{s +1/2} \Phi(ag) da
\ee
where $\Phi$ is a Schwartz-Bruhat function on the Lie algebra $\mathfrak{g}(\A)$, and $f_s^\text{std} \in \C(\bar G)$ for Re$(s)>\frac32$. This produces the standard $L$-functions on the spectral side of the trace formula for $G$. 

We will transform the main term of $J^G(f)$ in \eqref{kernel}, which is
\be
\label{Kxx}
\int_{C(\A)G(F)\backslash G(\A)}\sum_{\gamma \in G(F)} f(x^{-1}\gamma x) dx,
\ee
as follows. Let $V\simeq \mathbb G_a^3$ be the space  of binary quadratic forms, where the triple $X = (X_1,X_2,X_3)$ represents the form $X(u,v) = X_1u^2 + X_2uv + X_3v^2$. There is an action of $G$ on $V$ given by $X(u,v)\cdot x = X((u,v)x^t)$ and also of $\GL(1)$ given by scaling.  There is a natural isomorphism $\mathfrak{gl}_2 = \text{Mat}_2\simeq V\oplus \mathbb G_a$ where the map on the second component is given once again by the trace \cite[V.i]{matz0}. The adjoint action of $G$ on $\g$ splits into the action of the group $\{(\det x^{-1},x): x\in G\}$ on $V$ and the identity on $\mathbb G_a$. Let $V_G \subset V$ be the image of $G$ in $V$.
%Let $\gamma_X$ be the unique element in the fibre over $0$ in $\mathbb G_a$. Let $V_G \subset V$ be the image of $G$ in $V$.

We thus reindex the summation as 
\[
\int_{C(\A)G(F)\backslash G(\A)}\sum_{X \in V_G(F)}\sum_{q\in F} f(x(q,X) ) dx,
\]
where $x (q,X)$ denotes the action of $x$ on the second component by $(\det x^{-1},x)$.
Fix a nontrivial additive character of $\A$. The Fourier transform with respect to $q$ is
\be
\label{fourier}
\hat f(x(q,X)) = \int_\A  f(x(q,X)) \psi(aq)da
\ee
for fixed $x$ and $X$. 

\subsection{Poisson summation}

The key to our second main Theorem \ref{main2} is the following.

\begin{prop}
Let $f\in \C(\bar G)$. Then $J^G(f)$ is equal to the sum of 
\[
 \tr(1(f)) + \int_{C(\A)G(F)\backslash G(\A)}\sum_{X \in V_G(F)}\sum_{q\in F^\times} \hat f(x(q,X))  dx,
\]
where $\hat f$ denotes the additive Fourier transform of $f$ as defined in \eqref{fourier}, and
\[
- \int_{C(\A)G(F)\bs G(\A)} \sum_{\xi\in P(F)\bs G(F)} \int_{N(\A)} \sum_{\gamma\in G(F)}f(x^{-1}\xi^{-1}\gamma n \xi x) \chi_c(H(\xi x)) dn\ dx.
\]
\end{prop}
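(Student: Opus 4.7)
The starting point is the expression
\[
J^G(f) = \int_{Z(\A)G(F)\bs G(\A)} \Lambda^c_2 K(x,x)\,dx
\]
from \eqref{kernel}, with truncated kernel \eqref{kernel2}. The truncation correction in \eqref{kernel2} is carried over unchanged to become the third summand of the proposition, so my effort is devoted to rewriting the main kernel sum $\sum_{\gamma\in G(F)} f(x^{-1}\gamma x)$ by Poisson summation in the trace direction.

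First I would use the $G$-equivariant isomorphism $\mathfrak{gl}_2\simeq V\oplus \mathbb G_a$ recalled above to reindex $\gamma\in G(F)$ uniquely as a pair $(q,X)$ with $q\in F$ the trace of $\gamma$ and $X\in V_G(F)$ the $V$-component. Since the adjoint action splits as the identity on $\mathbb G_a$ and the twisted action $(\det x^{-1},x)$ on $V$, one already has $x^{-1}\gamma x = x(q,X)$ in the paper's notation, whence
\[
\sum_{\gamma\in G(F)} f(x^{-1}\gamma x) = \sum_{X\in V_G(F)}\sum_{q\in F} f(x(q,X)).
\]
Next I would apply the adelic Poisson summation formula (Lemma \ref{PSF}) to the inner sum over $q\in F$, with Fourier transform in the trace variable as in \eqref{fourier}, and peel off the $q=0$ contribution on the dual side:
\[
\sum_{q\in F} f(x(q,X)) = \mathcal F f(x(0,X)) + \sum_{q\in F^\times} \mathcal F f(x(q,X)),
\]
where $\mathcal F f(x(0,X)) = \int_{\A} f(x(q',X))\,dq'$. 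Integrating over $x\in Z(\A)G(F)\bs G(\A)$ and summing over $X\in V_G(F)$, the $q\in F^\times$ piece gives exactly the middle term of the proposition.

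It then remains to identify the $q=0$ contribution
\[
I_0 = \int_{Z(\A)G(F)\bs G(\A)} \sum_{X\in V_G(F)} \int_{\A} f(x(q',X))\,dq'\,dx
\]
with $\tr(1(f))$. I would do this by unfolding: the integral over $q'\in\A$ fills in the trace-direction fiber over $x\cdot X$ under the projection $\mathfrak{gl}_2(\A)\to V(\A)$, and combining this with the sum over $X\in V_G(F)$ and the outer $G(F)$-quotient integration rebuilds, up to the measure-zero locus $\det=0$, an integral of $f$ over $Z(\A)\bs G(\A)$ against the constant function. This integral is exactly $\tr(1(f))$ in the basic-function setup of $\C(G)$, paralleling Lemma \ref{Aca} where $\tr(1(f))$ was similarly expressed as a full integral of $f$ via the pushforward to the Steinberg--Hitchin base.

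I expect two main obstacles. First, I must verify the convergence hypotheses of Lemma \ref{PSF} for $q\mapsto f(x(q,X))$ uniformly enough in $x$ and $X$ to legitimize exchanging Poisson summation with the outer integration; this should leverage the Schwartz-type decay that basic functions $f\in\C(G)$ inherit from their pullbacks to $\mathfrak{gl}_2$, combined with majorization arguments of the Finis--Lapid type used in Section \ref{cont}. Second, and most delicate, is the clean identification of $I_0$ with $\tr(1(f))$: while the formal unfolding is transparent, one must carefully handle the noncompactness of $Z(\A)\bs G(\A)$ and the boundary behaviour of $f$ on the hypersurface $\det=0$ inside $\mathfrak{gl}_2(\A)$ to render the equality rigorous. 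Note that no residual characteristic~$2$ caveat should be needed here, since Poisson summation is being applied at the level of test functions on $\mathfrak{gl}_2$ rather than to orbital integrals with their characteristic-dependent germ expansions.
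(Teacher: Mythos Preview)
Your proposal is correct and follows essentially the same route as the paper: reindex $\gamma$ by $(q,X)$, apply Poisson summation in the trace variable, and identify the zeroth dual term with $\tr(1(f))$ by unfolding the sum over $X\in V_G(F)$ together with the $G(F)$-quotient integration. The paper phrases that last step in terms of $V$ being a prehomogeneous vector space under $G$, but this is exactly the unfolding you describe; your discussion of the convergence and boundary issues is, if anything, more careful than the paper's brief treatment.
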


\begin{proof}
The function $f\in \C(\bar G)$ descends to one on $\C(\A)$, then Poisson summation over $q$ as in \cite{tate} (c.f. the proof of Lemma \ref{tate}) then gives
\[
\int_{C(\A)G(F)\backslash G(\A)}\sum_{X \in V_G(F)}\sum_{q\in F} \hat f(x(a,X))  dx,
\]
and the zeroth term is 
\[
\int_{C(\A)G(F)\backslash G(\A)}\sum_{X \in V_G(F)} \int_\A  f(x(a,X)) da\ dx.
\]
The space of binary quadratic forms $V$ is a prehomogeneous vector space under the action of $G$. The outer integral and sum together range over all $V_G(\A)$, up to $C(\A)$, we therefore obtain the trace of the trivial representation,
\[
\tr(1(f))=\int_{C(\A)\backslash G(\A)} f(x) dx
\]
as claimed.

We note that the main term \eqref{Kxx} in $J^G(f)$ does not converge on its own, and applying the truncation by subtracting the second term in \eqref{kernel2} allows the expression to converge.
\end{proof}

\begin{rem}\label{comp2}
The intersection of $V$ with the regular elliptic locus of $G(F)$ is a subset $V_G^\text{ell}$ given by elements whose discriminant is not a square in $F$. In comparison, where Matz considers the terms in the trace formula {\em after} truncation, and applies the summation formula as above to the regular elliptic contribution only. This amounts to restricting the sum over $V_G(F)$ above to $V_G^\text{ell}(F)$. Matz shows that the analytic behaviour of the latter is described by Shintani zeta functions for definite and indefinite binary quadratic forms \cite{matz2} (compare also Corollary 49 and Proposition 57 of \cite{matz0}). Our extension of the sum to $V_G(F)$ could be interpreted as a `completion' that allows the character of the trivial representation to appear in the dual sum, in the sense of \S\ref{comp}.
\end{rem}

\begin{proof}[Proof of Theorem $\ref{main2}$]
We recall that the truncation operator acts trivially on the elliptic contribution, so if we isolate the term 
\[
\int_{C(\A)G(F)\backslash G(\A)}\sum_{X \in V_G^\text{ell}(F)}\sum_{q\in F} \hat f(x(q,X))  dx,
\]
we can write this again as an elliptic orbital integral as follows. Let $\gamma_{q,X}$ be the elliptic conjugacy class in $G(F)$ corresponding to the pair $(q,X)$, then the latter is equal to
\be
\label{ell}
\sum_{\gamma_{q,X}} \vol(C(\A)G_{\gamma_{q,X}}(F)\backslash G_{\gamma_{q,X}} (\A))\int_{G_{\gamma_{q,X}}(\A)\bs G(\A)} \hat f(x^{-1}\gamma_{q,X} x)  dx.
\ee
Note that this includes the singular elliptic contribution.

The remaining terms, which are equal to the complement $V_G^\text{nell}(F) = V_G(F) - V_G^\text{ell}(F)$ and with $q\in F^\times$, correspond to regular hyperbolic and unipotent elements in $G(F)$ with nonzero trace. The remaining contribution can then be written as the integral over $C(\A)G(F)\backslash G(\A)$ of 
\begin{align}\label{nell}
&\sum_{X \in V_G^\text{nell}(F)}\sum_{q\in F^\times} \hat f(x(q,X))  \\
&- \sum_{\xi\in P(F)\bs G(F)} \int_{N(\A)} \sum_{\alpha\in F^\times}f(x^{-1}\xi^{-1}\begin{pmatrix}\alpha&0\\0&1\end{pmatrix} n \xi x) \chi_c(H(\xi x)) dn,\notag
\end{align}
where the second term is rewritten as in \cite[(6.8)]{GJ}. 
\end{proof}

As with the original trace formula, one would like to transform the remaining terms \eqref{nell} into weighted orbital integrals. The trace of a diagonal element is invariant under multiplication by $n$. Let $X_{\alpha,n} \in V(\A)$ be the element corresponding to $(\begin{smallmatrix}\alpha&0\\0&1\end{smallmatrix})n$. The above integrand is then equal to
\[
 \sum_{\alpha\in F^\times}f(x \xi (\alpha+1, X_{\alpha,n})) \chi_c(H(\xi x)).
 \]
To apply Poisson, we have to add and subtract the term corresponding to $\alpha=0$, but the element $(\begin{smallmatrix}0&0\\0&1\end{smallmatrix})n$ does not lie $G$.    If we  assume additionally that  $f$ is  a function that pulls back to $\g$, as with \eqref{basics} and Lemma \ref{Gg} below, then the missing term is well-defined, giving 
 \[
 \sum_{\alpha\in F}\hat f(x \xi (\alpha+1, X_{\alpha,n})) \chi_c(H(\xi x)).
 \]
Making a change of variables $q = \alpha +1$ does not change the value of the integral, and doing so we arrive at the integral over $C(\A)G(F)\backslash G(\A)$ of 
\begin{align*}
&\sum_{X \in V_G^\text{nell}(F)}\sum_{q\in F^\times} \hat f(x(q,X))  + (q = 1) \\ 
&- \sum_{\xi\in P(F)\bs G(F)} \int_{N(\A)} \sum_{q\in F^\times}\hat f(x \xi (q, X_{q-1,n})) \chi_c(H(\xi x)) dn,
\end{align*}
where $(q = 1)$ denotes the correction term added. The regular unipotent terms correspond to $q=2$, and the rest are regular hyperbolic. We do not continue the analysis here, but note that in treating the regular hyperbolic terms \cite[V.i]{matz0} Matz must also add corrections terms whose entries have determinant 0 in order to apply Poisson summation  on the trace variable.

\section{Archimedean basic functions}

\label{archbas}

\subsection{A Fr\'echet algebra}

In this section, we abuse notation and set $G(\A)^1 = \{ g \in G(\A) : |\det(g)| = 1\}$, and denote $\C^1(G) = \C(G(\A)^1)$. Also let $\C^2(G(\A)^1;K)$ be the space of smooth right $K$-invariant functions on $G(\A)^1$ which belong to $L^2(G(\A)^1)$ along with all its derivatives. It is again a Fr\'echet space under the family of seminorms $|| f * X||_{2}$ for $f\in \C^2(G(\A)^1;K)$. We note that this differs slightly from the Schwartz algebra considered in \cite{isolation}, in that we do not require $f_v$ to be compactly supported for all nonarchimedean $v$. Denoting the spaces $\C^2(G)$ and $C_c^\infty(G^1)$ analogously, we note that $C_c^\infty(G^1)$ is also dense in $\C^2(G)$, hence the intersection $\C^1(G)\cap \C^2(G)$ is dense in both spaces. Furthermore, we can restrict our attention to a subspace of this intersection, which we next describe.

Let $F_v$ be a nonarchimedean, and denote by $G_v = G(F_v)$ and $K_v \subset G_v$ a maximal compact subgroup. Let $\H(G_v;K_v)$ be the usual $K_v$-spherical Hecke algebra. Given the canonical homomorphism $H_G:G(F_v) \to \a_G$, where $\a_G = \text{Hom}_\Z(X^*(G)_F,\R)$, let $\a_{G,v}$ be the image of $H=H_G$ in $\a_G$. Then define the almost compactly supported Hecke algebra $\H_\text{ac}(G_v;K_v)$ to be the set of functions $f:G_v\to \CC$ such that for all $b\in C_c(\a_{G,v})$, the product $b(H_G(\cdot))f(\cdot)$ belongs to $\H(G_v;K_v)$. The Satake isomorphism extends to $\H_\text{ac}(G_v;K_v)$. Let $S$ be a finite set of places of $F$ containing the archimedean valuations and outside of which $G$ is unramified. Define
\[
\C^0(G)= \{ f = \otimes_v f_v \in \C^1(G)\cap \C^2(G): \forall v\not\in S, f_v \in \H_\text{ac}(G_v;K_v)\}.
\]
This is the space that we shall consider. It contains the generalised Schwartz space of \cite{BK}, and in particular the basic functions that we need.

We first show that the method of \cite{isolation} isolating the cuspidal spectrum applies to the space $\C^0(G)$. We state it only in the restricted form that we shall need. Recall that a multiplier of $\C^0(G)$ is a complex linear operator $\C^0(G)\to \C^0(G)$ that commutes with left and right multiplication.

\begin{lem}
\label{isolation}
Let $\pi$ be a cuspidal automorphic representation of $G(\A)$ with trivial central character. Then there exists a multiplier $\mu$   such that for every $f\in \C^0(G)$, the right regular representation $R(\mu*f)$ maps $L^2(G(F)\backslash G(\A)^1/K)$ into its $\pi$-isotypic subspace, and $\pi(\mu*f) = \pi(f)$.
\end{lem}

\begin{proof}
We refer to \cite{isolation} for precise notation. Since the archimedean component $f_\infty$ of $f$ belongs in the Schwartz algebra $\C^2(G)$, 
the archimedean multiplier $\mu_\infty$ \cite[Theorem 3.15]{isolation} remains valid. On the other hand, to see that the full multiplier $\mu_{(M,\sigma)} = \mu^0_\infty \cdot \mu^\dagger \cdot \nu^\dagger$ of  \cite[Theorem 3.19]{isolation} extends, it suffices to observe that the nonarchimedean multiplier $\nu^\dagger$ belonging to the product of spherical Hecke algebras over almost all unramified places extends to an endomorphism of $\mathcal H_\text{ac}(G_v,K_v)$ under the convolution product.
\end{proof}

This allows one to isolate the cuspidal terms on the spectral side of the trace formula weighted by automorphic $L$-functions, but it does not give information about the geometric side. For that, we have to consider  basic functions more explicitly. 

\subsection{Basic functions} 

We are  interested in the global test function
\begin{equation}
\label{basic}
F^r_s = f_S \times \prod_{v\not\in S} b^r_{s,v}, \qquad s\in\CC
\end{equation}
where $f_S \in \C^1(G(F_S))$ and $b^r_{s,v}$ is the basic function on $G(F_v)$ characterised by tr$(\pi(b^r_{s,v})) = L_v(s,\pi,r)$ for any unramified smooth admissible representation $\pi_v$ of $G(F_v)$ with $F_v$ nonarchimedean. Also, set
\begin{equation}
\label{basic2}
f^r_s(g)= \int_{\A^\times} F^r(ag)|\det(ag)|^{s+\frac12} da. 
\end{equation}
It converges absolutely for Re$(s)>0$ (c.f. Lemma \ref{tate}) and  furthermore it is known that $f^r_s\in \C(G(
\A)^1)$ for Re$(s)$ large enough. The trace formula applied to \eqref{basic}, denoted by $J^G(f^r_s)$, is a distribution on $\C^1(G(F_S))$, but for simplicity we may also restrict to $f_S \in C_c^\infty(G(F_S)^1)$.

The existence of $b_{s,v}^r$ is well-known for nonarchimedean $F_v$, where it is based on an application of the Satake isomorphism \cite[\S5.7]{BK}. The following lemma records the archimedean analogue, which  is related to classsical Meijer G-functions (see for example \cite{G1,G2}). This will allow us to consider the $L$-function at infinite places also.

\begin{lem}
\label{lembas}
Let $G$ be an unramified quasisplit reductive group over $F_v$ archimedean and let $r:\hat G(\CC) \to \GL(V)$ be a finite-dimensional complex representation. Then there exists $b_{v,s}^r \in \C^1(G(F_v),K_v)$ such that $\tr(\pi_v(b_{s,v}^r))= L_v(s,\pi,r)$ for $\mathrm{Re}(s)\gg 0$ and any irreducible admissible representation $\pi_v$  of $G(F_v)$.
\end{lem}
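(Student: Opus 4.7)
The plan is to construct $b^r_{s,v}$ as the inverse spherical transform of the $L$-factor $L_v(s,\pi_\nu,r)$, viewing the latter as a function on the space of Satake parameters. For $F_v$ archimedean and $G$ unramified quasisplit, the role of the Satake isomorphism is played by the Harish-Chandra spherical transform: $K_v$-bi-invariant functions on $G(F_v)$ transform under the zonal spherical functions $\phi_\nu$ indexed by $\nu\in\a^*_\CC/W$, and irreducible spherical $\pi_\nu$ are parametrized by these same orbits. Via the local Langlands correspondence, composing the Langlands parameter $\varphi_{\pi_\nu}$ with $r$ produces an explicit product of $\Gamma_\R$ or $\Gamma_\CC$ factors depending meromorphically on $(s,\nu)$.

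First I would set
\[
b^r_{s,v}(g) \;=\; \int_{i\a^*/W} L_v(s,\pi_\nu,r)\,\phi_{-\nu}(g)\,|c(\nu)|^{-2}\,d\nu,
\]
where $|c(\nu)|^{-2}d\nu$ is the spherical Plancherel measure. Granted convergence, the trace identity then follows from Harish-Chandra's spherical Plancherel formula: for a spherical $\pi_\nu$ with unit $K_v$-fixed vector, the trace reduces to $\int_{G(F_v)} b^r_{s,v}(g)\phi_\nu(g)\,dg$, which by Fourier inversion returns $L_v(s,\pi_\nu,r)$. For $\pi_v$ without a $K_v$-fixed vector the trace vanishes automatically, matching the convention that $L_v(s,\pi_v,r)$ is then interpreted via the Langlands parameter as a product of Gamma factors, which the spherical transform does not see.

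Second, I would verify $b^r_{s,v}\in \C^1(G(F_v);K_v)$ for $\mathrm{Re}(s)$ sufficiently large. Stirling's estimate gives rapid decay of $|L_v(s,\pi_\nu,r)|$ in $\mathrm{Im}(\nu)$ in any vertical strip disjoint from the poles of the Gamma factors. Combined with Harish-Chandra's bounds on $\phi_\nu$ and its derivatives, this shows the integrand is absolutely integrable on $i\a^*$; differentiating under the integral by $X$ in the universal enveloping algebra only introduces polynomial growth in $\nu$, which is dominated by the Gamma decay. Shifting the contour from $i\a^*$ to $\eta + i\a^*$ with small $\eta$ in the negative Weyl chamber yields exponential decay of $\phi_{-\nu}$ along $A_v$, giving $L^1$ integrability of $b^r_{s,v}\ast X$ on $G(F_v)$; the condition $\mathrm{Re}(s)\gg 0$ is exactly what keeps the $L$-factor holomorphic through this shift.

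The main obstacle is the last point: upgrading from $L^2$-Schwartz, which is natural for the Plancherel measure, to the $L^1$-Schwartz condition that defines $\C^1$. The available contour shift is constrained by the poles of $L_v(s,\pi_\nu,r)$ in $\nu$, so one must choose $\mathrm{Re}(s)$ large enough that the Gamma factors remain holomorphic on a neighborhood of a shifted contour $\eta+i\a^*$ with $\eta$ deep enough into the antidominant chamber to produce integrable decay on $A_v$ via the Harish-Chandra $\Xi$-function estimate. Once this threshold is identified in terms of the weights of $r$, the bounds extend uniformly to every $b^r_{s,v}\ast X$, completing the verification.
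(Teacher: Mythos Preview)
Your approach is sound and runs parallel to the paper's, but with a more explicit analytic flavor. Both begin by invoking the archimedean local Langlands correspondence to write $L_v(s,\pi,r)$ as a finite product of $\Gamma$-factors in $s$ and the spectral parameter of $\pi_v$, and both then invoke an ``archimedean Satake'' to pass back to a bi-$K_v$-invariant function. The paper does this via the Harish-Chandra isomorphism $\mathcal Z(U(\g_\CC))\simeq \CC[\h^*_\CC]^W$, viewing the $L$-factor as an element of (a completion of) the right-hand side and asserting existence of $b^r_{s,v}$ rather formally; in particular it says nothing quantitative about why the resulting function lies in $\C^1$. You instead realize $b^r_{s,v}$ concretely as the inverse spherical transform of $\nu\mapsto L_v(s,\pi_\nu,r)$ and verify the $\C^1$-condition by Stirling decay plus a contour shift into the antidominant chamber. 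Your route is more rigorous on precisely the point the paper leaves implicit, and the threshold on $\mathrm{Re}(s)$ in terms of the weights of $r$ that you identify is exactly what is needed.

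One point deserves correction. Your remark that for non-spherical $\pi_v$ the vanishing of $\tr(\pi_v(b^r_{s,v}))$ ``matches the convention'' for $L_v(s,\pi_v,r)$ is not right: the $L$-factor is a nonzero product of Gamma functions regardless of whether $\pi_v$ has a $K_v$-fixed vector, so the identity fails as literally stated. The paper's proof shares this ambiguity (its statement also says ``any irreducible admissible representation'' while constructing a bi-$K_v$-invariant function), and in the intended application only spherical local components occur, so this does not affect the downstream corollary; but you should either restrict the claim to spherical $\pi_v$ or drop the misleading justification.
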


\begin{proof}
By the archimedean local Langlands correspondence \cite{LLCR}, $L_v(s,\pi,r)$ can be identified with an Artin $L$-factor $L(s,r\circ \phi_{\pi_v})$, where $\phi_{\pi_v}$ represents a $\hat G(\CC)$-conjugacy class of homomorphisms from from the Weil group of $F_v$ to $^LG$ associated to $\pi_v$. Hence it is given by a finite product of gamma functions whose argument is a linear combination of $s$ and the components of $\mu(\pi_v)$, the spherical conjugacy class in $\hat G(\CC)$ corresponding to $\pi_v$. In particular, $L(s,r\circ \phi_{\pi_v})$ is a finite product of Mellin transforms of exponential functions. We can thus view $L_v(s,\pi,r)$ as an element $\phi(\mu(\pi_v))$ of $\CC[[\mathfrak h^*]]^W$, where $\mathfrak h^*_\CC = X^*(T)\otimes_\Z\CC$ is the dual of a Cartan subalgebra of $\g\otimes_\R \CC$.

On the other hand, the Harish-Chandra isomorphism is the archimedean analogue of the Satake isomorphism, identifying the center of the universal enveloping algebra $\mathcal Z(U(\g_\CC))$ with the ring of algebraic functions $\mathbb C[\mathfrak h^*_\mathbb C]^W$, where $W$ is the Weyl group of $G$ relative to $T$ \cite[Part III]{HC1}. This allows us to identify the infinitesimal character $\chi_{\pi_v}$ of $\pi_v$ with a $W$-orbit of $\mathfrak h^*_\CC = X_*(\hat T)\otimes_\Z \CC$. Putting these together, it follows that there exists a bi-$K$-invariant function on $G(F_v)$ satisfying the desired properties.
\end{proof}

For $r = \text{Sym}^k$, it follows from \cite[(2.22)]{monoid} and \cite[\S4]{hahn} that the monoid  associated  to $b_v^r$ can be identified with $\g(F_v)$ for nonarchimedean $v$. The archimedean  $L$-factors are computed explicitly in \cite{MS}.

\begin{lem}
\label{Gg}
$b^r_{s,v}\in \C^1(G(F_v))$ and pulls back to an element of $\C^1(\g(F_v))$.
\end{lem}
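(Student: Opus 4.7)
The plan is to build $b^r_{s,v}$ from a Schwartz-type function on $\g(F_v)$ so that both assertions of the lemma become transparent consequences of Schwartz decay. Concretely, following the archimedean Godement--Jacquet/Braverman--Kazhdan construction (cf.\ \cite{li,takagi}), one realizes
\[
b^r_{s,v}(g) = |\det g|_v^{s+\frac12}\,\Phi^r(g),
\]
where $\Phi^r$ is a Schwartz function on $\g(F_v)$ whose spherical Mellin transform, evaluated against the spherical vector of an unramified principal series, reproduces the finite product of $\Gamma$-factors of Lemma~\ref{lembas}. For the standard representation of $\GL_2$ one takes $\Phi^r$ to be the archimedean Gaussian on $\mathfrak{gl}_2(F_v)$; in general $\Phi^r$ is a polynomial in the matrix entries times this Gaussian, tailored via the local Langlands correspondence to give the correct combination of $\Gamma$-factors.

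Granted this realization, $b^r_{s,v}\in \C^1(G(F_v))$ follows at once. The function is smooth and $K_v$-bi-invariant by construction, and for $\mathrm{Re}(s)$ large enough the factor $|\det g|_v^{s+1/2}$ together with the Schwartz decay of $\Phi^r$ makes $b^r_{s,v}$ and all of its derivatives rapidly decreasing on $G(F_v)$. For any $X$ in the universal enveloping algebra of $\g(F_v)\cap G(\A)^1$, the translate $b^r_{s,v}*X$ is again of the shape (polynomial)$\cdot$(Gaussian)$\cdot|\det|^{s+1/2}$ (possibly with the exponent of $|\det|$ shifted by an integer coming from the derivatives of the character), which remains integrable for $\mathrm{Re}(s)$ large. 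Hence $b^r_{s,v}*X\in L^1(G(F_v))$ for every such $X$.

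The pullback claim is then immediate from the explicit formula, since $|\det g|^{s+1/2}\Phi^r(g)$ makes sense a priori for every $g\in\g(F_v)$, not just for $g$ invertible. When $\mathrm{Re}(s)+1/2>0$ the factor $|\det|^{s+1/2}$ vanishes smoothly along the singular locus $\{\det=0\}$, so the pullback is smooth on all of $\g(F_v)$; its derivatives have the same form, and the Schwartz decay of $\Phi^r$ gives $L^1$ integrability of every derivative, so the pullback lies in $\C^1(\g(F_v))$.

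The main obstacle is producing, for an arbitrary representation $r$, an archimedean Schwartz function $\Phi^r$ whose spherical Mellin transform realizes the prescribed product of $\Gamma$-factors, i.e.\ identifying the archimedean analogue of the Braverman--Kazhdan Schwartz space. For the standard representation of $\GL_2$---the case relevant to Corollary~\ref{corr}---this is the classical Godement--Jacquet construction and is completely explicit. For more general $r$ the existence of $\Phi^r$ is part of the ongoing archimedean Braverman--Kazhdan program, and we rely on \cite{li,takagi} for its construction and basic properties.
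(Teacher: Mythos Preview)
Your approach differs from the paper's in structure. The paper splits by place: for nonarchimedean $v$ it invokes \cite[\S3]{li} to note that $b^r_{s,v}$ is determined by its values on the cocharacter lattice $X_*(T)$, which sits inside both $G$ and $\g$, and cites \cite[\S3.2]{li} directly for the $L^1$ and $L^2$ membership; the archimedean case is then dispatched by pointing back to the Harish-Chandra isomorphism argument of Lemma~\ref{lembas}. You instead attempt a uniform realization in Godement--Jacquet form $b^r_{s,v}(g)=|\det g|_v^{s+1/2}\Phi^r(g)$ and read both conclusions off the Schwartz decay of $\Phi^r$. This is more concrete when it applies (notably for $r$ standard), but as you yourself acknowledge, producing such a $\Phi^r$ for arbitrary $r$ is precisely the archimedean Braverman--Kazhdan problem, so in the end you defer to the same literature the paper cites.

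There is one genuine slip. The claim that $|\det|^{s+1/2}$ ``vanishes smoothly along $\{\det=0\}$'' once $\mathrm{Re}(s)+1/2>0$ is false: for real $x$ the function $|x|^\alpha$ is $C^k$ at $0$ only when $\alpha>k$, and is never $C^\infty$ unless $\alpha$ is a nonnegative even integer. So for fixed $s$ your pullback is at best finitely differentiable across the singular locus, and the argument for membership in a Schwartz-type space on $\g(F_v)$ does not go through as stated. This does not actually conflict with the paper, which never asserts smoothness on $\g$; its phrase ``pulls back to $\C^1(\g(F_v))$'' should be read in the loose sense of Definition~\ref{basdef}, namely that the function extends naturally to $\g(F_v)$ with the integrability needed for \eqref{basics}. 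But your proof as written promises more than the explicit formula delivers. You also omit a separate treatment of the nonarchimedean case; the paper's observation that the basic function is supported on $X_*(T)$ is the clean way to see the pullback there and is worth including.
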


\begin{proof}
For $v$ nonarchimedean, $b^r_{s,v}$ is determined explicitly by its values on the cocharacter group $X_*(T)$ where $T$ is an $F_v$-split torus \cite[\S3]{li}, in particular it is defined on $\g(\A)$. Also by \cite[\S3.2]{li} it belongs to $L^2(G(F_v)))$ for Re$(s)>0$ and $L^1(G(F_v))$ for Re($s)$ large enough. The archimedean case follows similarly by the proof of the preceding lemma. 
\end{proof}

We also record the following relation between the analytic continuation of the distribution $J^G(f^r_s)$ and $L$-functions.

\begin{proof}[Proof of Corollary $\ref{corr}$]
Analytic continuation follows from Lemmas \ref{isolation} and \ref{lembas}. This includes symmetric power representations $r=\text{Sym}^k$ for all $k\ge1$, and as it is well-known (e.g. \cite{LRS} and \cite[p.48]{murty}) this would imply the Ramanujan conjecture for $\pi$. 
\end{proof}

\noindent 
 One might hope, perhaps naively, that this concretely translates the problem into a geometric one that can be studied from a new perspective. We treat the regular unipotent contribution as a simple exercise. 

\subsection{Regular unipotent terms}

We conclude with a simple observation about the regular unipotent terms in the trace formula. (Note that the removal of the $\tr(\gamma)\neq 0$ terms does not affect the unipotent terms.) Let $\mathcal O'$ be an open subgroup of $\mathcal O_f = \prod_{v<\infty}\mathcal O_v$, and let $\C(\A;\mathcal O')$ denote the space of smooth $\mathcal O'$-invariant functions $\varphi$ on $\A$ such that $\varphi^{(n)}\in L^1(\A/\mathcal O')$ for all $n\ge0$. 

\begin{lem}
\label{tate}
Let $\varphi\in \C(\A;\mathcal O')$. Then the zeta integral
\[
z(\varphi,s) = \int_\A \varphi(x) |x|^s dx
\]
has meromorphic continuation to $s\in\CC$ with at most simple poles at $s=0,1$ with residues $-\vol(F^\times \backslash \A^\times)\varphi(0)$ and $\vol(F^\times \backslash \A^\times)\hat\varphi(0)$ respectively. 
\end{lem}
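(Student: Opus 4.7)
The plan is to adapt the classical Tate thesis argument for the analytic continuation of zeta integrals to the larger class $\C(\A;\mathcal O')$. First I split the integral at the idele norm $|x|=1$:
\[
z(\varphi,s) = \int_{|x|\ge 1}\varphi(x)|x|^s\,d^\times x + \int_{|x|<1}\varphi(x)|x|^s\,d^\times x,
\]
where I reinterpret the measure in the statement against the multiplicative Haar measure on $\A^\times$ in the standard way. The first piece is entire in $s$: the $L^1$ condition on $\varphi$ and all its derivatives, combined with $\mathcal O'$-invariance at the finite places, provides the decay required for uniform convergence on compact sets of the $s$-plane.

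Second, to handle the small-norm piece I would unfold using the idele decomposition $\A^\times \cong (\A^\times)^1\times \R_{>0}$ and the fact that $V:=\vol(F^\times\bs(\A^\times)^1)$ is finite. One obtains
\[
\int_{|x|<1}\varphi(x)|x|^s\,d^\times x = \int_0^1 t^s\int_{F^\times\bs(\A^\times)^1}\sum_{\alpha\in F^\times}\varphi(\alpha ut)\,d^\times u\,\frac{dt}{t}.
\]
The inner sum is the place to apply the adelic Poisson summation formula recorded as Lemma \ref{PSF}. Its three hypotheses must be verified in turn for $x\mapsto \varphi(ut\cdot x)$: continuity and $L^1$-integrability come directly from the definition of $\C(\A;\mathcal O')$; uniform convergence of $\sum_{\alpha\in F}\varphi(\alpha ut + y)$ follows from archimedean smoothness together with $\mathcal O'$-invariance at the finite places; and absolute convergence of the dual sum follows from Lemma \ref{dual}, using that the $L^1$-bounds on $\varphi^{(n)}$ for all $n$ translate via integration by parts into rapid decay $\hat\varphi(\xi) = O(|\xi|^{-N})$ at the archimedean places.

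Third, Poisson summation then yields, after isolating the $\alpha = 0$ and $\beta=0$ contributions,
\[
\sum_{\alpha\in F^\times}\varphi(\alpha ut) = \frac{1}{t}\sum_{\beta\in F^\times}\hat\varphi(\beta/(ut)) + \frac{\hat\varphi(0)}{t} - \varphi(0).
\]
Substituting back, the change of variable $t\mapsto 1/t$ sends the dual-sum integral to one supported on $(1,\infty)$, which is again entire in $s$ by the same decay argument now applied to $\hat\varphi$. The two remaining boundary terms are elementary:
\[
V\int_0^1 t^s\Bigl(\tfrac{\hat\varphi(0)}{t} - \varphi(0)\Bigr)\tfrac{dt}{t} = V\left(\tfrac{\hat\varphi(0)}{s-1} - \tfrac{\varphi(0)}{s}\right),
\]
producing the two advertised simple poles at $s=1$ and $s=0$ with residues $V\hat\varphi(0)$ and $-V\varphi(0)$ respectively, matching the paper's volume convention.

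The main obstacle is checking that Poisson summation applies to this broader class $\C(\A;\mathcal O')$, rather than to the standard Schwartz–Bruhat functions. In particular, $\C(\A;\mathcal O')$ need not consist of rapidly decaying functions; however the condition that every derivative lies in $L^1$ is precisely tailored so that $\hat\varphi$ is rapidly decaying, which together with $\mathcal O'$-invariance is enough to invoke Lemmas \ref{PSF} and \ref{dual} and justify all Fubini-type interchanges in the unfolding above.
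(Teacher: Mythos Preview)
Your proposal is correct and follows essentially the same route as the paper: both reduce the statement to Tate's classical argument by verifying the three hypotheses of Lemma~\ref{PSF} for $\varphi$, with the crucial dual-sum convergence obtained via integration by parts from the $L^1$ bounds on the derivatives. The paper's version is terser---it simply cites Tate's thesis for the equivalence with Poisson summation, invokes external references for conditions (i) and (ii), and records the explicit estimate $(1+|x|^2)|\hat\varphi(x)|\le\|\varphi\|_1+\|\varphi''\|_1$ for (iii)---whereas you spell out the splitting at $|x|=1$ and the residue computation in full.
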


\begin{proof}
Recall from Tate's thesis \cite[Theorem 4.4.1]{tate} that this is essentially equivalent to establishing the Poisson summation formula for $\varphi$, with the poles occurring if the test function $\varphi$ satisfies the conditions as in the proof of \cite[Theorem 4.4.1]{tate}. We refer again to Lemma \ref{PSF}. Property (i) is known in general for basic functions by \cite{li}. The uniform bound on the first sum is given in \cite[Lemma 3.4]{FLGL2}. 
 For the dual sum, we first observe that 
\[
(1 + |x|^2) \hat \varphi (x) \le |\hat \varphi (x)|  + |x^2\hat \varphi(x)|  \le ||\varphi||_1 + ||\varphi''||_1,
\]
where the second inequality follows by applying integration by parts, and the following bound 
\[
\sum_{x\in F} |\hat \varphi(x)| \le C(||\varphi||_1 + ||\varphi''||_1),
\]
then follows by summing over $x$.
\end{proof}

We caution that the following result applies to the original trace formula, e.g., \cite[(6.25)]{GJ}. It appears in the complement of $J^G_\text{ell}(f)$ in $J^G(f)$ in Theorem \ref{main} but  not in the form that we have obtained in Theorem \ref{main2}. To treat the latter case, one would have to continue the analysis as sketched at the end of \S\ref{matz}.

\begin{cor}
The regular unipotent contribution to $J^G(f^r_s)$ is meromorphic in $s$ and holomorphic for $\mathrm{Re}(s)>1$.
\end{cor}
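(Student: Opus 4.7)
The plan is to realize the regular unipotent contribution to $J^G(f^r_s)$ as a zeta integral of the type treated in Lemma \ref{tate}, and then invoke that lemma directly. Following the formula $[GJ, (6.25)]$ cited in the text, the regular unipotent term for $\GL(2)$ is, up to a volume constant and a sum of an archimedean derivative term, of the shape
\[
I_{\mathrm{unip}}(f^r_s) \;=\; c_1 \int_{\A} \varphi_{s}(x)\,dx \;+\; c_2\int_{\A}\varphi_{s}(x)\log|x|\,dx,
\]
where $\varphi_s$ is obtained from $f^r_s$ by integrating over the diagonal split torus action against the unipotent variable. Using the definition $f^r_s(g)=\int_{\A^\times}F^r(ag)|\det(ag)|^{s+1/2}\,da$ and the decomposition $F^r = f_S\times\prod_{v\notin S}b^r_{s,v}$, the change of variables $a\mapsto a/x$ in the inner integral of $f^r_s$ applied to the regular unipotent element converts each $\log|x|$ and $dx$ occurrence into a Mellin-type integral in the variable $x$ against $|x|^{s-1/2}$ (or its derivative in $s$), so that the unipotent contribution is expressed in terms of $z(\varphi,s)$ and $\partial_s z(\varphi,s)$ for a single function $\varphi$ on $\A$ built from the basic functions and $f_S$.

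Next, I would verify that the resulting $\varphi$ lies in $\C(\A;\mathcal O')$ for some open subgroup $\mathcal O'\subset\mathcal O_f$. At nonarchimedean places outside $S$, the spherical basic function $b^r_{s,v}$ is $K_v$-invariant and its values on the split torus are controlled by the Satake parameters as in Lemma \ref{Gg}, providing $\mathcal O_v^\times$-invariance and $L^1$-decay of $\varphi_v$ for $\mathrm{Re}(s)$ large. At ramified finite places and at the archimedean place, the required smoothness and integrability of derivatives come from $f_S\in\C^1(G(F_S))$ together with Lemma \ref{lembas}, since the archimedean basic function is a finite product of Mellin transforms of exponentials and therefore Schwartz in the trace variable. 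Together these give $\varphi\in\C(\A;\mathcal O')$, so the hypotheses of Lemma \ref{tate} are met.

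Having placed $\varphi$ in the right Schwartz space, Lemma \ref{tate} delivers the meromorphic continuation of $z(\varphi,s)$ to all of $\CC$ with at most simple poles at $s=0$ and $s=1$, and hence the meromorphic continuation of $\partial_s z(\varphi,s)$ with poles of order at most two at the same points. Since after tracking the shift coming from the factor $|\det(ag)|^{s+1/2}$ the only possible singularities of $I_{\mathrm{unip}}(f^r_s)$ lie at $s=0$ and $s=1$, this yields holomorphy on $\mathrm{Re}(s)>1$, as required.

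The main obstacle I anticipate is bookkeeping rather than conceptual: one must pin down the exact identification of the unipotent contribution with $z(\varphi,s)$ and $\partial_s z(\varphi,s)$ (including the constants $c_1,c_2$ and the precise shift in $s$), and then verify the $\C(\A;\mathcal O')$ condition uniformly in $s$ for the archimedean component, since derivatives in the group variable and the parameter $s$ need to be controlled simultaneously to apply Lemma \ref{tate} as a function of $s$.
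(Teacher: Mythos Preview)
Your proposal is correct and follows essentially the same route as the paper: identify the regular unipotent contribution with the function $F(x)=\int_K f^r_s\bigl(k^{-1}\begin{smallmatrix}1&x\\0&1\end{smallmatrix}k\bigr)dk$ on $\A$ via the Gelbart--Jacquet derivation on p.~236, check that this function lies in $\C(\A;\mathcal O')$, and then invoke Lemma~\ref{tate} to obtain meromorphic continuation with possible poles only at $s=0,1$.

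One bookkeeping point worth tightening, which you already anticipated: your change of variables $a\mapsto a/x$ does not literally convert the $\log|x|$ term of \cite[(6.25)]{GJ} into $\partial_s z(\varphi,s)$ with the \emph{same} $s$ as in $f^r_s$ for a general representation $r$. In the paper's formulation the $s$-dependence resides in the function $F=F_s$ itself, while the Tate exponent in Lemma~\ref{tate} plays the role of an auxiliary variable whose analytic continuation (and derivative) is what \cite[p.~236]{GJ} uses. Holomorphy in $s$ then comes from the holomorphy of $s\mapsto F_s$ in $\C(\A;\mathcal O')$ combined with the continuity of the resulting linear functional. This does not affect your overall argument, and the paper is equally terse on this identification.
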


\begin{proof}
Following \cite[p.236]{GJ}, the expression regular unipotent contribution to the trace formula is derived from a Poisson summation formula for 
\[
F(x) = \int_{K}f\left(k^{-1}\begin{pmatrix}1 & x \\ 0 & 1\end{pmatrix}k\right)dk 
\]
as a Schwartz-Bruhat function on $\mathbb A$. Extending this via Lemma \ref{tate}, the same argument in \cite[p.236]{GJ} gives us that the unipotent contribution is indeed meromorphic in $s$ and holomorphic in Re($s)>1$. In fact, one can do better, as since the unipotent contribution essentially reduces to Tate integrals, the analysis in \cite[V.i.v]{matz0} can be applied in a similar manner, so we leave this to the reader.
\end{proof} 

Note that this simple consequence is true precisely because it is in the 1-dimensional setting, where any representation $r$ of GL(1) remains 1-dimensional, a fact which Lemma \ref{tate} can be understood as an incarnation of. In dimension 2 or more, this is no longer the case and the complexity increases along with the dimension of $r$. This is reflected by the fact that for any cuspidal automorphic representation $\pi$ of GL(2), the automorphic $L$-function $L(s,\pi,r)$ will have poles depending crucially on the representation $r$.

\subsection*{Acknowledgments} We thank Julia Gordon and Ramin Takloo-Bighash for helpful discussions.
This work is partially supported by NSF grant DMS-2212924.

\bibliography{BESL2}
\bibliographystyle{alpha}
\end{document}